\documentclass[11pt]{amsart}
\usepackage{amssymb,amsmath,amsthm,verbatim,mathrsfs}

\newtheorem{thm}{Theorem}

\newtheorem{defn}[thm]{Definition}
\newtheorem{lem}[thm]{Lemma}
\newtheorem{cor}[thm]{Corollary}

\theoremstyle{remark}
\newtheorem*{rem}{Remark}

\newcommand{\del}{\partial}

\newcommand{\ddt}{\frac{\del}{\del t}}
\renewcommand{\epsilon}{\varepsilon}

\title{On the positive mass theorem for manifolds with corners}

\author{Donovan McFeron and G\'abor Sz\'ekelyhidi}
\address{School of Theoretical and Applied Science, Ramapo College of
New Jersey, Mahwah, NJ}
\email{dmcferon@ramapo.edu}
\address{Department of Mathematics, University of Notre Dame, Notre
Dame, IN}
\email{gszekely@nd.edu}

\begin{document}

\begin{abstract}
We study the positive mass theorem for certain non-smooth
metrics following P. Miao's work. Our approach is to smooth the metric
using the Ricci flow. As well as improving some previous results on the
behaviour of the ADM mass under the Ricci flow, we extend the analysis of the
zero mass case to higher dimensions.
\end{abstract}

\maketitle
\section{Introduction}
The positive mass theorem, roughly speaking, says that an asymptotically flat
manifold with non-negative scalar curvature has non-negative ADM mass.
This was proved by Schoen-Yau~\cite{SY79} for manifolds of dimension at
most 7, and by Witten~\cite{Wit81} (see also Bartnik~\cite{Bar86}) for spin
manifolds of arbitrary dimension. The ADM mass is an invariant
of the manifold introduced by Arnowitt-Deser-Misner~\cite{ADM61}, 
depending on the geometry at infinity. For precise
definitions see Section~\ref{sec:prelim}.

It is a natural question to generalise the positive mass theorem to
non-smooth Riemannian metrics. Part of the problem is to define what
non-negative scalar curvature means when the metric is not sufficiently
regular for the scalar curvature to be defined in the usual way. 

We study this problem for a special type of singular metric, following
P. Miao~\cite{Miao02}. He considers a smooth manifold $M$ with a compact
domain $\Omega\subset M$ such that $M\setminus\Omega$ is diffeomorphic
to $\mathbf{R}^n$ minus a ball, and $\Sigma=\partial\Omega$ is a smooth
hypersurface. The metric $g$ on $M$ is assumed to be Lipschitz, such
that its restriction to $\Omega$ and $M\setminus\overline{\Omega}$ is
$C^{2,\alpha}$. Away from $\Sigma$ the scalar curvature of $g$ can be defined
as usual. On $\Sigma$ write $H(\Sigma_-)$ for the mean curvature of
$\Sigma$ in $(\overline{\Omega}, g)$, and $H(\Sigma_+)$ for the mean
curvature of $\Sigma$ in $(M\setminus\Omega, g)$, in both cases with
respect to the normal vectors pointing out of $\Omega$. As explained
in~\cite{Miao02}, the condition
\begin{equation} \label{eq:H}
	H(\Sigma_-) \geqslant H(\Sigma_+)
\end{equation}
can be thought of as non-negativity of the scalar curvature along
$\Sigma$ in a distributional sense. The main theorem in~\cite{Miao02} is
\begin{thm}[Miao~\cite{Miao02}]\label{thm:Miao}
	Suppose that $M$ is a manifold for which the positive mass
	theorem of Schoen-Yau~\cite{SY79} and Witten~\cite{Wit81} holds
	for smooth asymptotically flat metrics. Suppose now
	that $g$ is a
	metric on $M$ with corners along a hypersurface $\Sigma$ as described
	above, and that $g$ is asymptotically flat in
	$C^2_{\delta}$ where $\delta > (n-2)/2$. 
	In addition, suppose that 
	the scalar curvature of $g$ is non-negative and
	integrable away from $\Sigma$ and in addition the condition
	(\ref{eq:H}) holds. Then the ADM mass $m(g)\geqslant 0$. 
\end{thm}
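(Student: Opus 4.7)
The plan is to reduce the theorem to the smooth positive mass theorem by approximating $g$ with a family of smooth asymptotically flat metrics $g_\epsilon$ whose scalar curvatures are non-negative and whose ADM masses converge to $m(g)$. Once such a family is constructed, the smooth positive mass theorem of Schoen-Yau (or Witten) gives $m(g_\epsilon) \geqslant 0$, and passing $\epsilon \to 0$ yields $m(g) \geqslant 0$.

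To construct the approximations I would first mollify $g$ in a small tubular neighborhood of $\Sigma$ at scale $\epsilon$, leaving $g$ unchanged outside this neighborhood. This produces a smooth metric $\tilde g_\epsilon$ with $m(\tilde g_\epsilon) = m(g)$. The key observation is that the condition $H(\Sigma_-) \geqslant H(\Sigma_+)$ is precisely the statement that the distributional scalar curvature along $\Sigma$ is non-negative, so one expects the scalar curvature $\tilde R_\epsilon$ to be \emph{morally} non-negative; concretely, its negative part should vanish in $L^1$ as $\epsilon \to 0$, even though pointwise it may be of order $1/\epsilon$ on a neighborhood of width $\epsilon$ around $\Sigma$.

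Next I would run Ricci-DeTurck flow starting from $\tilde g_\epsilon$, which produces a smooth family $\tilde g_\epsilon(t)$ on some interval $(0, T_\epsilon)$. Under the flow, the scalar curvature satisfies $\ddt R = \Delta R + 2|\mathrm{Ric}|^2 \geqslant \Delta R$, so comparison with the heat equation together with the $L^1 \to L^\infty$ smoothing of the heat semigroup lets me make the negative part of $R$ arbitrarily small in $L^\infty$ after a time that is polynomial in $\epsilon$. A small conformal perturbation then restores pointwise non-negativity of scalar curvature while changing the mass by a quantity tending to zero with $\epsilon$. The asymptotic flatness assumption $\delta > (n-2)/2$ should ensure that the ADM mass is preserved (up to an error vanishing with $\epsilon$) along both the Ricci flow and the conformal rescaling, following ideas of Dai-Ma.

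The main obstacle, I expect, is obtaining estimates on the Ricci flow that are uniform in $\epsilon$, since the initial curvatures of $\tilde g_\epsilon$ blow up like $\epsilon^{-1}$. One needs short-time existence on an interval that does not shrink too fast with $\epsilon$, together with sharp enough control of the flowed metric at infinity to keep the ADM mass change negligible; this is where the decay rate $\delta > (n-2)/2$ is presumably essential. A secondary technical point is the quantitative $L^1$-smallness of $\tilde R_\epsilon^-$, which demands a mollifier adapted to the collar geometry of $\Sigma$ so that the concentrated part of the curvature of $\tilde g_\epsilon$ is genuinely the smoothing of the non-negative distributional surface term $(H(\Sigma_-) - H(\Sigma_+)) \, d\sigma$, rather than an artifact of the mollification.
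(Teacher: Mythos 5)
Your overall strategy---Miao's mollification \cite{Miao02} giving smoothings with unchanged mass, $R \geqslant -K$, and negative part of scalar curvature small in $L^1$, followed by Ricci--DeTurck flow and an appeal to the smooth positive mass theorem---is the same as the paper's, but your endgame differs. You stay at fixed $\epsilon$, flow until $R^-$ is small in $L^\infty$, and then restore $R\geqslant 0$ by a conformal perturbation; the paper never conformally changes the metric. Instead it passes to Simon's limit flow $g(t)$ started at the singular metric itself, uses Lemma~\ref{lem:Rneg} (an $L^1$-contraction estimate for the negative part, needing only the uniform lower bound $R(\hat g_\epsilon)>-K$) to conclude $\int_{\{R(g_\epsilon(t))<0\}}|R(g_\epsilon(t))|\,dV_t \leqslant e^K\epsilon$ uniformly in $t$, hence $R(g(t))\geqslant 0$ pointwise in the limit $\epsilon\to 0$, and then combines mass constancy along the flow (Corollary~\ref{cor:mass-constant}) with lower semicontinuity of the mass under local $C^2$ convergence (Theorem~\ref{thm:mass-limit}) to get $0 \leqslant m(g(t)) \leqslant m(\hat g)$. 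The point of avoiding the conformal step, as the paper stresses, is that conformal changes can \emph{increase} the mass, which is harmless for Theorem~\ref{thm:Miao} but blocks the rigidity argument of Theorem~\ref{thm:zeromass}; note also that in your scheme the flow is nearly redundant, since Miao's smoothings already have $R^-$ small in $L^{n/2}$ (bounded below by $-K$ on a set of measure $O(\epsilon)$), which suffices for the conformal method directly.

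The genuine gap is the one you flag but do not resolve: uniform-in-$\epsilon$ existence time, and here the quantitative mismatch is fatal to your route as written. Shi's short-time existence \cite{Shi89} from the initial curvature bound $|Rm(\hat g_\epsilon)| \lesssim \epsilon^{-1}$ gives a lifespan $T_\epsilon \sim \epsilon$, while your heat-kernel estimate $\sup R^-(t) \lesssim t^{-n/2}\Vert R^-(0)\Vert_{L^1} \sim t^{-n/2}\epsilon$ requires $t \gg \epsilon^{2/n}$, and $\epsilon^{2/n} \gg \epsilon$ for $n\geqslant 3$: curvature-based existence theory cannot reach the time scale your smoothing argument needs. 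The paper's resolution is Simon's $h$-flow for $C^0$ initial data \cite{Sim02}: since $\hat g$ is Lipschitz and $(1-\epsilon)\hat g \leqslant \hat g_\epsilon \leqslant (1+\epsilon)\hat g$, a fixed background metric $h$ that is $1+\epsilon(n)$-fair to $\hat g$ yields a lifespan $T>0$ \emph{independent} of $\epsilon$, together with the uniform derivative bounds \eqref{eq:C1}--\eqref{eq:C2}. A second, smaller gap is your appeal to Dai--Ma \cite{DM07} for mass control: their results assume stronger decay than $\delta > (n-2)/2$, which is precisely why the paper proves Theorem~\ref{thm:mass-flow} at this borderline rate (via the uniform $L^1$ control of Lemma~\ref{lem:intRdecay} and the flux decay $\int_{\partial B_r}|\nabla R|\,dS \to 0$ of Lemma~\ref{lem:nablaR}), and proves Lemma~\ref{lem:af} to show the weighted asymptotics persist, uniformly in $\epsilon$, in $C^{1,\alpha}_{\delta-\alpha}$ (note the loss from $C^2_\delta$, cf.\ Remark~\ref{rem:decay}, which is still above the threshold $(n-2)/2$ for small $\alpha$). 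With Simon's theorem and these mass lemmas substituted for the missing steps, your outline can be completed; as it stands, the proof has a hole exactly at the point you identified.
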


In this paper we will first give a slightly different proof of this result,
although some ingredients will be used from~\cite{Miao02}. The basic
idea in~\cite{Miao02}
is to first obtain smoothings $g_\epsilon$ of the metric $g$ using
mollifiers, and then show
that for small $\epsilon$ one can conformally change $g_\epsilon$ to
make the scalar curvature non-negative. One then applies the usual
positive mass theorem. Instead we use the Ricci flow introduced by
Hamilton~\cite{Ham82} to
smooth the metric, which at least heuristically should preserve the
non-negative scalar curvature condition. It was shown by
Simon~\cite{Sim02} that the Ricci flow can be started with a $C^0$
initial metric, and the construction relies on smoothings $g_\epsilon$
as above. We then use the bounds on the scalar curvature of $g_\epsilon$
obtained by Miao to show that the metrics along the Ricci flow starting
at the singular metric $g$ have non-negative scalar curvature. 

When the mass is zero, the corresponding rigidity result 
was shown by Miao for dimension
3 using the results of Bray-Finster~\cite{BF02}. 
We can extend this to any $M$ for which the smooth positive mass
theorem holds. 
\begin{thm}\label{thm:zeromass}
	Under the assumptions of Theorem~\ref{thm:Miao} suppose that
	$m(g)=0$. Then there exists a $C^{1,\alpha}$ diffeomorphism
	$\phi:M\to\mathbf{R}^n$, such that $g=\phi^*g_{Eucl}$, where
	$g_{Eucl}$ is the standard Euclidean metric. 
\end{thm}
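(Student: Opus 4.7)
The plan is to reduce the statement to the rigidity part of the smooth positive mass theorem by running the Ricci flow used in the proof of Theorem~\ref{thm:Miao}. Let $g(t)$, for $t \in (0, T]$, be the smooth family of asymptotically flat metrics constructed there, each with non-negative scalar curvature, converging to $g$ as $t \to 0^+$ in $C^0$ uniformly on compact sets and in $C^{2,\alpha}_{loc}$ away from $\Sigma$.

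The first step is to control the ADM mass along the flow. The smooth positive mass theorem, applied to the smooth asymptotically flat metric $g(t)$, gives $m(g(t)) \geqslant 0$. Using the analysis of the behavior of the ADM mass under the Ricci flow alluded to in the abstract, one aims to show $m(g(t)) \leqslant m(g) + o(1) = o(1)$ as $t \to 0^+$, and ideally monotonicity $m(g(t)) \leqslant m(g)$ along the flow; combined with non-negativity, this forces $m(g(t)) = 0$ for all sufficiently small $t > 0$ (or at least along a sequence $t_k \to 0^+$, which suffices for the argument below).

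Given $m(g(t_k)) = 0$, the rigidity part of the smooth positive mass theorem (which holds on $M$ by hypothesis) produces isometries $\phi_k : (M, g(t_k)) \to (\mathbf{R}^n, g_{Eucl})$. Normalize by fixing a basepoint $p_0 \in M$ with $\phi_k(p_0) = 0$ and a prescribed orthonormal frame at $p_0$. Uniform $C^0$ bounds on $g(t_k)$ translate into uniform Lipschitz bounds on $\phi_k$, so Arzel\`a--Ascoli gives a subsequential limit in $C^0_{loc}$, a Lipschitz map $\phi : M \to \mathbf{R}^n$ satisfying $\phi^* g_{Eucl} = g$ in the distributional sense.

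The main obstacle is to establish the regularity of $\phi$ across the corner $\Sigma$ and its global bijectivity. Away from $\Sigma$, the $C^{2,\alpha}$ convergence of $g(t_k)$ together with elliptic regularity in harmonic coordinates for both source and target upgrades the convergence of $\phi_k$ to $C^{3,\alpha}_{loc}$. Across $\Sigma$, where $g$ is only Lipschitz, the pointwise isometry property of the $d\phi_k$ gives a uniform lower bound on $|\det d\phi_k|$ (from the uniform positivity of $g(t_k)$ in coordinates), hence nondegeneracy of the limit; combined with the identity $\phi^* g_{Eucl} = g$, this yields $C^{1,\alpha}$ regularity of $\phi$ across $\Sigma$. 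Finally, local injectivity from the nondegeneracy of $d\phi$, together with asymptotic flatness and completeness of $g$ and the normalization at $p_0$, promotes $\phi$ to a global $C^{1,\alpha}$ diffeomorphism from $M$ onto $\mathbf{R}^n$, as required.
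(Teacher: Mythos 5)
Your overall strategy --- run the flow, force $m(g(t))=0$, invoke rigidity of the smooth positive mass theorem, and pass the resulting flat structure back to $t=0$ --- is the paper's strategy, and your first step is recoverable from the paper's machinery: $m(g_\epsilon(t))=m(\hat g_\epsilon)=m(g)$ by constancy of the mass along the smooth flows (Corollary~\ref{cor:mass-constant}), Theorem~\ref{thm:mass-limit} then gives $m(g(t))\leqslant m(g)=0$, and with $R(g(t))\geqslant 0$ and the asymptotic flatness of Lemma~\ref{lem:af} the smooth positive mass theorem forces $m(g(t))=0$ exactly, for all $t\in(0,T]$, not just along a sequence. The genuine gap is in your compactness step. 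From $C^0_{loc}$ convergence of the normalized isometries $\phi_k$, even with uniform Lipschitz bounds, you only get weak-$*$ convergence of $d\phi_k$ in $L^\infty$; the isometry identity $\sum_m \partial_i\phi_k^m\,\partial_j\phi_k^m = (g(t_k))_{ij}$ is quadratic in the first derivatives and does not pass to weak limits, so your assertion that $\phi^*g_{Eucl}=g$ holds distributionally is not established. Likewise, ``uniform lower bound on $|\det d\phi_k|$ combined with the identity yields $C^{1,\alpha}$ regularity across $\Sigma$'' is a claim, not an argument --- and the corner $\Sigma$ is precisely where all the difficulty sits: a nondegenerate Lipschitz map whose a.e.\ differential is an isometry is not thereby $C^{1,\alpha}$ without invoking a regularity theorem for isometries of low-regularity metrics, which in turn requires first knowing that $\phi$ \emph{is} such an isometry.

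The missing quantitative ingredient --- and the heart of the paper's proof --- is that since $\hat g$ is Lipschitz, Simon's improved estimate \eqref{eq:C1} gives $|\nabla g(t)|\leqslant C_1$ uniformly in $t$, and then Taylor's identity~\cite{Tay06}
\[ \frac{\partial^2 \phi^m}{\partial x^i\partial x^j} = \Gamma^k_{ij}\frac{\partial \phi^m}{\partial x^k} - \hat{\Gamma}^m_{kl}\frac{\partial\phi^k}{\partial x^i}\frac{\partial\phi^l}{\partial x^j} \]
bounds the second derivatives of any isometry in terms of the (uniformly bounded) Christoffel symbols of source and target. This yields uniform $C^2$ bounds on the maps, including across $\Sigma$, hence genuine $C^{1,\alpha}_{loc}$ subconvergence, after which the pullback identity passes to the limit classically and the $C^{1,\alpha}$ regularity of $\phi$ is automatic. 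The paper also sidesteps your non-canonical choice of rigidity isometries at each time: once $g(t)$ is flat for all $t\in(0,T]$, the $h$-flow degenerates to a pure diffeomorphism flow $\partial_t g_{ij}=\nabla_iW_j+\nabla_jW_i$, and integrating the DeTurck field $W$ (bounded in $C^0$ by \eqref{eq:C1}) backwards from $t=T$ produces a canonical family $\phi_t$ with $g(t)=\phi_t^*g(T)$, so that the limit $t\to 0$ requires no normalization or selection at all. Your approach could likely be repaired by importing exactly these two ingredients (the uniform $C^1$ bound \eqref{eq:C1} and Taylor's formula), but as written the step across $\Sigma$ fails; note also that your claimed $C^{2,\alpha}_{loc}$ convergence of $g(t_k)$ to $g$ away from $\Sigma$ is nowhere established in the paper and is not needed in its argument.
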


\noindent 
The result shows that when the mass is zero, then the metric $g$ is the flat
metric, written in terms of a possibly non-smooth coordinate system.
The zero mass case for Lipschitz metrics was studied also by
Shi-Tam~\cite{ST02} and Eichmair-Miao-Wang~\cite{EMW10} in higher dimensions,
but there the metrics $g$ dealt with are assumed to satisfy equality in
(\ref{eq:H}). It seems likely that their methods extend to the case
where strict inequality holds in (\ref{eq:H}) at least on spin
manifolds. Note that Theorem 2 answers a question of
Bray-Lee~\cite{BL09}, showing that one can remove the spin assumption in
the equality case of their Theorem 1.4. 

The main advantage of using the Ricci flow in proving
Theorem~\ref{thm:zeromass} as opposed to the method of
Miao~\cite{Miao02}, is that when we smooth out the metric using the
Ricci flow, the mass can not increase (with more regularity
assumptions as in Theorem~\ref{thm:mass-flow} below, 
the mass is constant, but when the initial metric is singular we could
only show that the mass does not increase). In contrast in Miao's
method, when we apply a conformal change to the smoothings $g_\epsilon$
to make the scalar curvature non-negative, the mass will only change
slightly,  but it can increase. This is enough to prove non-negativity
of the mass of the singular metric, but it makes analyzing the zero mass
case harder. 

The behaviour of the ADM mass under the Ricci flow has already been
studied by Dai-Ma~\cite{DM07} and also briefly by
Oliynyk-Woolgar~\cite{OW08} in their study of the Ricci flow on
asymptotically flat manifolds. In these works, however, less than
optimal decay conditions are assumed for the metric. We therefore show
the following.  
\begin{thm}\label{thm:mass-flow}
	If the metric $g$ is asymptotically flat in $C^2_{\delta}$ for
	$\delta > (n-2)/2$, and the scalar curvature $R(g)$ is integrable, 
	then
	$R(g(t))\in L^1$ for $t>0$ and the ADM mass is constant along
	the Ricci flow. The short time solution of the Ricci flow used
	here is constructed by Shi~\cite{Shi89}.
\end{thm}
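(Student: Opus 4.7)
The plan is to leverage the smoothness of Shi's short-time solution $g(t)$ to justify differentiating the ADM mass formula term by term under the flow. On $[0,T]$ the perturbation $h(t)=g(t)-g_{Eucl}$ satisfies a quasilinear parabolic equation whose principal part in DeTurck gauge is a componentwise Laplacian, and weighted parabolic Schauder estimates show that the $C^2_\delta$ decay is preserved for short time. A weighted bootstrap combined with Shi's higher derivative estimates then gives decay of the higher curvature derivatives,
\[
|\nabla^k\mathrm{Rm}(g(t))|=O(r^{-\delta-2-k}),\qquad t>0,\ k\geqslant 0,
\]
so in particular $|\mathrm{Ric}(g(t))|^2=O(r^{-2\delta-4})$ is integrable against $d\vol_{g(t)}$ since $\delta>(n-2)/2$.

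To prove $R(g(t))\in L^1$ I would use $\del_t R=\Delta R+2|\mathrm{Ric}|^2$ together with Duhamel's formula, representing $R(t)$ as a heat-semigroup flow of $R(0)$ plus a time integral of $2|\mathrm{Ric}|^2$. Since the heat semigroup associated to the parabolic operator above is $L^1$-contractive and $\int_0^t\!\int_M|\mathrm{Ric}|^2\,d\vol\,ds$ is finite by the previous step, this yields $\|R(t)\|_{L^1}\leqslant\|R(0)\|_{L^1}+C$. Equivalently, integrating $\del_t R$ against a cutoff $\chi_r$ with $|\Delta\chi_r|=O(r^{-2})$ and $\chi_r\to 1$ gives the same bound.

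For mass conservation, from $\del_t g_{ij}=-2R_{ij}$ one computes
\[
\dot m(t)=-2c_n\lim_{r\to\infty}\int_{S_r}\bigl(\del_j R_{ij}-\del_i R\bigr)n^i\,dS_0.
\]
The contracted Bianchi identity $\nabla^j R_{ij}=\tfrac{1}{2}\nabla_i R$ rewrites $\del_j R_{ij}$ as $\tfrac{1}{2}\del_i R$ modulo $\Gamma\cdot R$ terms whose spherical integrals are $O(r^{n-4-2\delta})\to 0$ because $\delta>(n-4)/2$. The problem thus reduces to showing $\lim_{r\to\infty}\int_{S_r}\del_i R\cdot n^i\,dS_0=0$. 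Applying the divergence theorem in $(M,g(t))$, this limit equals $\int_M\Delta_g R\,d\vol_g$ up to correction terms of the same order in $r$ coming from $(g^{ij}-\delta^{ij})$, which vanish. Finally, the $L^1$ bound just proved gives $\int_M\Delta_g R\,d\vol_g=\lim_{r\to\infty}\int_M R\,\Delta_g\chi_r\,d\vol_g=0$, since $|\Delta_g\chi_r|\leqslant Cr^{-2}$ and $\int_{B_{2r}\setminus B_r}|R|\,d\vol_g\to 0$ as the tail of an $L^1$ function.

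The main technical obstacle is verifying that the $C^2_\delta$ decay class and its derivative upgrades truly propagate along the flow so that all the error terms above fall off faster than $r^{1-n}$ on the spheres $S_r$ in every dimension. In low dimensions the raw $C^2_\delta$ hypothesis already suffices, but for larger $n$ one must combine Shi's pointwise derivative estimates with weighted H\"older parabolic theory for $h(t)$ to obtain $C^{3,\alpha}_\delta$-type decay for $t>0$. This is the step that distinguishes the present argument from Dai--Ma and Oliynyk--Woolgar, in which stronger initial decay assumptions made such propagation automatic.
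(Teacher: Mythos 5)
Your outline contains a genuine gap at its central step. The claimed decay improvement $|\nabla^k\mathrm{Rm}(g(t))|=O(r^{-\delta-2-k})$ for $t>0$ is false in general: parabolic smoothing gains regularity (derivative bounds with factors $t^{-k/2}$) but does \emph{not} improve spatial decay rates. This is exactly the content of Remark~\ref{rem:decay} in the paper: for the linear heat equation on $\mathbf{R}$ with initial data $f(x,0)=\sin(x)/(1+x^2)$, every derivative of $f(\cdot,t_0)$ still only decays like $|x|^{-2}$ for $t_0>0$. Correspondingly, the Appendix only establishes $|\nabla^2\eta|\lesssim \rho^{-\delta-1}$, one order worse than the ``natural'' rate. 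This sinks your treatment of the key limit $\lim_{r\to\infty}\int_{S_r}\partial_i R\,n^i\,dS_0=0$: the reduction to $\int_M\Delta_g R\,d\vol_g=0$ via cutoffs requires $\Delta R\in L^1$ (otherwise the identity $\int_{B_r}\Delta_g R\,d\vol=\int_{S_r}\nabla R\cdot n\,dS$ makes the argument circular, since the improper integral \emph{is} the quantity you are trying to control), and $\Delta R\in L^1$ is never established. Even granting your (false) decay claim, $|\Delta R|=O(r^{-\delta-4})$ is integrable only when $\delta>n-4$, which fails for $n\geqslant 6$ with $\delta$ near $(n-2)/2$; and with the true rate $|\nabla R|=O(r^{-\delta-2})$ the sphere integral is $O(r^{n-3-\delta})$, which tends to zero only for $n<4$. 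The paper circumvents precisely this difficulty in Lemma~\ref{lem:nablaR}: it proves $\int_{\partial B_r}|\nabla R|\,dS\to 0$ directly, and uniformly for $t\in[t_0,T]$, by converting the \emph{uniform-in-$t$ tail decay} of $\int_{M\setminus B_r}|R(g(t))|\,dV$ (Lemma~\ref{lem:intRdecay}, proved with cutoffs satisfying $\Delta f\leqslant Cf$ and the smoothing $u=\sqrt{R^2+\epsilon}$) into pointwise smallness of $|R|$ and then of $|\nabla R|$ via the local parabolic maximum principle and local $L^p$/Schauder estimates on unit-scale parabolic cylinders --- no weighted decay of $\nabla R$ at all.

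Two further points need attention even if the limit above were handled. First, writing $\dot m(t)=\lim_{r\to\infty}\frac{d}{dt}m_r(t)$ interchanges $d/dt$ with $r\to\infty$, which requires the convergence of $\frac{d}{dt}m_r(t)$ to be uniform in $t$ on $[t_0,T]$; this is exactly what the uniformity statements in Lemmas~\ref{lem:intRdecay} and \ref{lem:nablaR} supply in the paper's $\epsilon$-argument (Corollary~\ref{cor:mass-constant}). Second, your argument at best gives $\dot m=0$ for $t>0$; the theorem asserts constancy including the initial time, so you must also show $\lim_{t\to 0}m(g(t))=m(g(0))$. The paper does this in the last paragraph of Corollary~\ref{cor:mass-constant}, again using the $t$-uniform tail function $\tilde\eta(r)$; your bound $\Vert R(t)\Vert_{L^1}\leqslant \Vert R(0)\Vert_{L^1}+C$ gives a tail going to zero for each fixed $t$ but not uniformly, which is insufficient there. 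Your Duhamel/cutoff argument for $R(g(t))\in L^1$ is close in spirit to Lemma~\ref{lem:intRdecay} (note the volume-form term $-uR$, handled via the preserved lower bound $R>-K$, which your ``$L^1$-contractive semigroup'' assertion glosses over), but it should be run so as to produce the uniform tail estimate, since that estimate is used twice downstream.
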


In Section~\ref{sec:prelim} we will recall the basic definitions of
asymptotically flat manifolds and the ADM mass, as well as Simon's
construction~\cite{Sim02} of the Ricci flow with $C^0$ initial data. In
Section~\ref{sec:mass} we prove Theorem~\ref{thm:mass-flow} as well as
some other results that we need later. In Section~\ref{sec:limit} we
study how the mass behaves under taking a limit of a sequence of
metrics. Finally, in
Section~\ref{sec:main} we give the proof of Theorem~\ref{thm:zeromass}.
The proof of a technical lemma is given in the Appendix.

\subsection*{Acknowledgements}
We would like to thank Mu-Tao Wang for helpful discussions, and Duong
Phong for his interest in this work. We would also like to thank Dan Lee
for useful comments on a previous version of this paper.
The second named author is
partially supported by NSF grant DMS-0904223.

\section{Preliminaries}\label{sec:prelim}
\subsection{The ADM mass}
Suppose that $M$ is a non-compact manifold such that there exists a
compact subset $K\subset M$ and functions $x^1,\ldots,x^n:M\setminus
K\to\mathbf{R}^n$, which give a diffeomorphism
\[ (x^1,\ldots,x^n):M\setminus K\to \mathbf{R}^n\setminus B\]
for some ball $B\in\mathbf{R}^n$, which for simplicity we can take to be
the unit ball (after rescaling the metric if necessary). 
For $\delta\in\mathbf{R}$ we define the weighted space
$C^{k,\alpha}_\delta$ as follows (see Bartnik~\cite{Bar86}). Fix a
metric $h$ on $M$ which is the Euclidean metric outside $K$, and let
$\rho:\mathbf{R}^n\to\mathbf{R}$ be a smooth function
equal to $|x|$ outside the ball of
radius 2, and equal to $1$ on the ball of radius 1. Through the
asymptotic coordinates $x^i$ we can think of $\rho$ as a function on
$M$. Then the weighted H\"older norms are defined by
\[ \begin{aligned}
	\Vert f\Vert_{C^{k,\alpha}_\delta} = &\sup_M \rho^{\delta}|f| +
	\sup_M
	\rho^{\delta+1}|\nabla f| +\ldots + \sup_M \rho^{\delta+k}
	|\nabla^k f|\\
 &+ \sup_{x,y\in M} \min\{\rho(x),\rho(y)\}^{
\delta+k+\alpha}\frac{|\nabla^k f(x) - \nabla^k f(y)|}{d(x,y)^\alpha},
\end{aligned}\]
where the derivatives and norms are taken with respect to $h$. A
Riemannian metric $g$ on $M$ is
defined to be asymptotically flat in $C^{k,\alpha}_\delta$ if $\delta >
0$ and we have
\begin{equation}
		g_{ij}-h_{ij} \in C^{k,\alpha}_\delta(M\setminus K).
\end{equation}
In particular if $g$ is asymptotically flat in $C^{1,\alpha}_\delta$,
then there is some constant $C$ such that on $M\setminus K$ we have
\begin{equation}\label{eq:decay1}
	\begin{aligned}
	|g_{ij} - h_{ij}| &< C|x|^{-\delta} \\
	|\nabla g_{ij}| &< C|x|^{-\delta-1},
\end{aligned}
\end{equation}
in terms of the asymptotic coordinates.

If $g$ is a smooth asymptotically flat metric in
$C^{1,\alpha}_\delta$ for $\delta > (n-2)/2$, and the scalar
curvature $R(g)\in L^1$, then the ADM mass (see
Arnowitt-Deser-Misner~\cite{ADM61} and Bartnik~\cite{Bar86}) is defined
by
\[ m(g) = \lim_{r\to\infty} \int_{\partial B_r}
g_{ij,j}-g_{jj,i}\,dS^i,\]
where the derivatives are taken with respect to the asymptotic
coordinates $x^i$. 
The positive mass theorem is then the following. 

\begin{thm}[Schoen-Yau~\cite{SY79}, Witten~\cite{Wit81}] Suppose that
	either $\dim M\leqslant 7$ or that $M$ is a spin manifold. Let
	$g$ be a smooth asymptotically flat metric of order $\delta >
	(n-2)/2$ in $C^{1,\alpha}_\delta$,
	and suppose that the scalar curvature of $g$ is
	integrable, and non-negative. Then $m(g)\geqslant 0$.
	Furthermore, if $m(g)=0$ then $M\cong \mathbf{R}^n$ and $g$ is
	flat. 
\end{thm}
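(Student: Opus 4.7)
The plan is to use Witten's spinor method when $M$ is spin, since it handles both the inequality and the rigidity assertion uniformly, and to outline the Schoen-Yau minimal hypersurface argument in the low-dimensional non-spin case. Throughout one works in the fixed asymptotic chart $x^1,\ldots,x^n$.

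For Witten's argument, fix a constant (Euclidean-parallel) spinor $\psi_0$ at infinity. The first step is to find a harmonic spinor $\psi$ on $(M,g)$ with $\psi-\psi_0$ in a weighted Sobolev space such as $W^{1,2}_{-\delta}$, solving $D\psi=0$. Because $g_{ij}-h_{ij}\in C^{1,\alpha}_\delta$ with $\delta>(n-2)/2$, the spin connection acting on an extension of $\psi_0$ decays fast enough that $D\psi_0\in L^2$, so one must solve $D\phi=-D\psi_0$ for $\phi\in W^{1,2}_{-\delta}$. The Lichnerowicz--Schr\"odinger identity $D^2=\nabla^*\nabla+R/4$ combined with $R\geq 0$ shows $\ker D=\{0\}$ on the relevant weighted space, and weighted Fredholm theory in the style of Bartnik and Parker--Taubes gives the needed invertibility. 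Once $\psi$ is produced, apply the same identity on a coordinate ball $B_r$ and integrate by parts to obtain
\[
\int_{B_r}\!\left(|\nabla\psi|^2+\tfrac{R}{4}|\psi|^2\right)dV = \int_{\partial B_r}\langle \nabla_\nu\psi+\nu\cdot D\psi,\psi\rangle\,dS.
\]
Expanding the spin connection in terms of $g_{ij}-h_{ij}$ and using the decay \eqref{eq:decay1}, the boundary integral converges to $\tfrac14 m(g)|\psi_0|^2$ as $r\to\infty$, all cross terms decaying faster than $r^{-(n-1)}$ because $\delta>(n-2)/2$. Since $R\geq 0$ the left-hand side is non-negative, forcing $m(g)\geq 0$.

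For the rigidity part, if $m(g)=0$ then $\nabla\psi\equiv 0$ and $R\equiv 0$, and letting $\psi_0$ range over a basis of constant spinors at infinity yields a basis of parallel spinors on $M$. This forces trivial holonomy, so $g$ is flat. A developing map argument, together with the identification of the end with $\mathbf{R}^n\setminus B$, then yields an isometry $M\cong\mathbf{R}^n$. In the non-spin case with $n\leq 7$, the Schoen--Yau strategy is to assume $m(g)<0$, perturb conformally to make $R$ strictly positive while keeping the mass negative, and then construct an asymptotically planar area-minimizing hypersurface whose interior regularity is ensured by $n-1\leq 6$. The stability inequality combined with $R_M\geq 0$ and an induction on dimension yields a contradiction, with the base case being a Gauss--Bonnet argument on a plane. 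The main technical obstacle in Witten's route is the weighted Fredholm theory needed to produce $\psi$ with the correct asymptotics and to convert the boundary flux cleanly into the ADM mass; in the Schoen--Yau route the analogous obstacle is the existence, regularity, and asymptotic flatness of the minimizing hypersurface.
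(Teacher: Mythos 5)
The paper contains no proof of this statement: it is quoted as background from Schoen--Yau~\cite{SY79} and Witten~\cite{Wit81}, and the paper's own results consume it as a black box --- Theorem~\ref{thm:Miao} is even phrased for ``manifolds for which the positive mass theorem holds'', and the proof of Theorem~\ref{thm:diffeo} simply invokes both the inequality and the equality case. So there is no internal argument to compare yours against; I can only assess your sketch as an outline of the classical proofs. On that score, your Witten route is essentially the standard one and correctly organized: produce the harmonic spinor asymptotic to $\psi_0$ via weighted Fredholm theory (your decay check is right --- $D\psi_0 = O(|x|^{-\delta-1})$ is in $L^2$ precisely because $\delta > (n-2)/2$), kill the kernel with the Lichnerowicz identity and $R\geqslant 0$, and identify the boundary flux with the mass. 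Two small points: with the paper's unnormalized definition of $m(g)$ the limiting constant is a dimensional constant $c_n$ rather than $\tfrac14$; and in the rigidity step, the passage from a full basis of parallel spinors to vanishing curvature rests on injectivity of Clifford multiplication by $2$-forms on spinors (one parallel spinor alone only gives Ricci-flatness), after which the developing-map/Bieberbach argument you gesture at does give $M\cong\mathbf{R}^n$.

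The genuine gap is in the non-spin case: your Schoen--Yau outline addresses only the inequality $m(g)\geqslant 0$ (by contradiction from $m(g)<0$), but the statement also asserts rigidity for $n\leqslant 7$ without a spin assumption, and that does not follow from the contradiction scheme you describe. The standard supplement runs as follows: if $m(g)=0$ but $R\not\equiv 0$, solve the conformal equation to produce a scalar-flat asymptotically flat metric with strictly smaller --- hence negative --- mass, contradicting the inequality; so $R\equiv 0$. Next, perturb $g$ in the direction $-t\,\mathrm{Ric}$ and conformally restore zero scalar curvature; the first variation of the mass is negative unless $\mathrm{Ric}\equiv 0$, so $g$ is Ricci-flat. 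Finally one must upgrade Ricci-flat to flat and identify $M$ with $\mathbf{R}^n$: in $n=3$ this is immediate, while in higher dimensions it requires a further step (e.g.\ improved decay in harmonic coordinates together with a volume-comparison argument). As written, your proposal establishes the full statement only when $M$ is spin.
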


When studying the mass, the following formulas will be useful (see
Bartnik~\cite{Bar86}).
For any metric $g$ we have
\begin{equation}\label{eq:R}
	\begin{aligned}
		R(g) = &|g|^{-1/2} \partial_i\left(|g|^{1/2} g^{ij}
		\left(\Gamma_j
		- \frac{1}{2}\partial_j(\log |g|)\right)\right) \\
		&- \frac{1}{2} g^{ij}\Gamma_i\partial_j(\log |g|) +
		g^{ij}g^{kl}g^{pq}\Gamma_{ikp}\Gamma_{jql},
	\end{aligned}
\end{equation}
where $|g|$ is the determinant, $\Gamma^i_{kl}$ are the Christoffel
symbols and $\Gamma^i=g^{kl}\Gamma^i_{kl}$. For an asymptotically flat
metric in $C^{1,\alpha}_\delta$ for $\delta > (n-2)/2$, we have
\begin{equation}\label{eq:expand}
	|g|^{1/2}g^{ij}\left(\Gamma_j-\frac{1}{2}\partial_j(\log
	|g|)\right) 
	= g_{ij,j} - g_{jj,i} + O\big(|x|^{-1-2\delta}\big),
\end{equation}
where the constant in the error term only depends on $C$. Using this and
integrating \eqref{eq:R} by parts on the region $M\setminus B_r$, we get
\begin{equation}\label{eq:mparts}
	\begin{aligned}
		m(g) =& \int_{M\setminus B_r} R(g)\,dV +
		\int_{\partial B_r} 
		|g|^{1/2}g^{ij}\left(\Gamma_j-\frac{1}{2}\partial_j(\log
		|g|)\right)\,dS^i \\
		&- \int_{M\setminus B_r}\frac{1}{2}g^{ij}\Gamma_i
		\partial_j(\log |g|) + g^{ij}g^{kl}g^{pq} \Gamma_{ikp}
		\Gamma_{jql}\, dV \\
		=& \int_{M\setminus B_r} R(g)\, dV + \int_{\partial B_r}
		g_{ij,j} - g_{jj,i}\,dS^i + O(r^{-\lambda})
	\end{aligned}
\end{equation}
for some $\lambda > 0$. The constant in the error term
$O\left(r^{-\lambda}\right)$ only depends on the $C^{1,\alpha}_\delta$ norm of
$g-h$.

\subsection{Ricci flow on asymptotically flat manifolds}
The Ricci flow is the evolution equation
\begin{equation}\label{eq:Ricflow}
	\ddt g_{ij} = -2R_{ij}
\end{equation}
introduced by Hamilton~\cite{Ham82}, where $R_{ij}$ is the Ricci tensor
of the time-dependent metric $g_{ij}$. It was studied recently
on asymptotically flat manifolds by
Oliynyk-Woolgar~\cite{OW08} and Dai-Ma~\cite{DM07}. Using the results of
Shi~\cite{Shi89} it is clear that a solution of the Ricci flow exists for a
short time with an asymptotically flat initial metric, 
but the question is whether it remains asymptotically flat for positive
time. The result that we need can be stated as follows.  
\begin{thm}\label{thm:AFRF}
	Suppose that $g$ is an asymptotically flat metric of order
	$\delta > 0$ in $C^2_{\delta}$. 
	Suppose that $g(t)$ solves the Ricci flow for
	$t\in[0,T]$, with $g(0)=g$. Then $g(t)$ is asymptotically flat
	in $C^2_\delta$
	of the same order $\delta$. More precisely there is a constant
	$C$ depending on $g$ and $T$, such that 
	\[ \Vert g(t) - h\Vert_{C^2_\delta} < C\]
	for all $t\in[0,T]$. 
\end{thm}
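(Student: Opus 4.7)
The plan is to convert the Ricci flow into a strictly parabolic system via the DeTurck trick based on the flat background $h$, propagate the weighted decay through parabolic rescaling, and transfer the resulting bound back to the Ricci flow via the gauge diffeomorphisms. Following the standard DeTurck construction with vector field $W^k = \tilde g^{ij}(\Gamma^k_{ij}[\tilde g]-\Gamma^k_{ij}[h])$, the gauged flow $\tilde g(t)$ satisfies, in asymptotic coordinates where $h_{ij}=\delta_{ij}$,
\[ \partial_t \tilde g_{ij} = \tilde g^{kl}\partial_k \partial_l \tilde g_{ij} + Q_{ij}(\tilde g, \partial \tilde g), \]
where $Q$ is a polynomial quadratic in $\partial \tilde g$ with coefficients depending smoothly on $\tilde g$; because $h$ is flat, no terms involving $\tilde g$ alone arise. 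Setting $u = \tilde g - h$, this is a quasilinear strictly parabolic equation with initial data in $C^2_\delta$.

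For the weighted estimate on $u$, I would rescale at a point $x_0$ with $R := |x_0|$ large. Define
\[ v(y, s) = R^{\delta}\, u(x_0 + Ry,\, R^2 s) \quad \text{on} \quad B_1(0)\times[0, T/R^2]. \]
A direct computation gives the rescaled equation
\[ \partial_s v = a^{kl}(y,s)\partial_k\partial_l v + R^{-\delta}\, \tilde Q(v, \partial v), \]
where $a^{kl}$ is uniformly elliptic (since $\tilde g^{kl} = \delta^{kl}+O(R^{-\delta})$ at this scale) and the forcing is a small perturbation for large $R$. The rescaled initial datum $v(\cdot, 0)$ is bounded in $C^{2,\alpha}$ uniformly in $x_0$ because $g \in C^2_\delta$, and the rescaled time interval is short. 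Standard parabolic $C^{2,\alpha}$ Schauder estimates, combined with a maximum-principle barrier argument applied to $|v|^2$, then give $\|v\|_{C^{2,\alpha}}\leq C$ independently of $R$; scaling back this yields $\|u(t)\|_{C^2_\delta}\leq C$ on $[0, T]$.

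To pass from $\tilde g(t)$ back to $g(t)$, observe that $W$ is built from Christoffel symbols of $\tilde g$, so $W = O(|x|^{-\delta-1})$ with correspondingly decaying derivatives. Integrating $\partial_t \phi_t = -W\circ\phi_t$ over $[0, T]$ produces diffeomorphisms asymptotic to the identity in the $C^2_{\delta+1}$ sense, so the extra contributions from pulling back are of strictly better decay than $|x|^{-\delta}$, and $g(t) = \phi_t^*\tilde g(t)$ retains the $C^2_\delta$ bound with a comparable constant.

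The main obstacle is the rescaling step: Shi's existence result provides smooth solutions with interior derivative estimates, but no a priori control of the decay rate at infinity. The point is to exploit that $Q$ is quadratic in $\partial u$, which generates the smallness factor $R^{-\delta}$ under parabolic rescaling and lets essentially linear parabolic theory close the estimate uniformly at every scale.
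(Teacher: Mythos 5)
Your overall scheme --- DeTurck gauge relative to the flat background, a weighted estimate for the gauged flow, then transferring back through the gauge diffeomorphisms --- is the ``systematically working in weighted spaces'' route that the paper attributes to Oliynyk--Woolgar and explicitly declines to follow; the paper instead proves its variant (Lemma~\ref{lem:af}) by a maximum principle argument. The problem is that your rescaling step, which you yourself flag as the main obstacle, does not close as written: the argument is circular. The factor $R^{-\delta}$ in front of $\tilde Q(v,\partial v)$ makes the nonlinearity a small perturbation only if $|\partial v|$ is already known to be bounded uniformly in $x_0$, and that is exactly the weighted bound $|\partial u|\lesssim |x|^{-\delta-1}$ you are trying to prove. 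What is available a priori for a solution as in Theorem~\ref{thm:AFRF} are only unweighted bounds (say $|\partial u|\leqslant C$ from Shi-type estimates), and under your rescaling these give merely $|\partial v|\leqslant CR^{\delta+1}$, so the ``small'' term $R^{-\delta}|\partial v|^2$ can be of size $R^{\delta+2}$: there is no perturbative regime to hand to linear Schauder theory. One could try to repair this by a continuity/bootstrap argument in $t$, but that requires knowing beforehand that $t\mapsto\Vert u(t)\Vert_{C^2_\delta}$ is finite and continuous for $t>0$, which is neither a hypothesis nor supplied by Shi's construction --- establishing it is essentially the content of the theorem. This is precisely what the paper's method sidesteps: in the proof of Lemma~\ref{lem:af} the weight is built into the quantities themselves ($\rho^{2\delta}|\eta|^2$, $\rho^{2\delta+2}|\nabla\eta|^2$, and the Shi-type combination $C_6|\nabla\eta|^2+tf$), which are a priori only of polynomial growth, and the noncompact maximum principle of Ecker--Huisken applies to such functions directly, with no need for the weighted norm to be known finite in advance.

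There are also smaller defects you would have to repair even granting a bootstrap. Your claim that $v(\cdot,0)$ is bounded in $C^{2,\alpha}$ uniformly in $x_0$ ``because $g\in C^2_\delta$'' is false: a $C^2$ bound does not control the H\"older seminorm of the second derivatives, and Schauder estimates up to $s=0$ require $C^{2,\alpha}$ data; with $C^2_\delta$ data one must argue differently near $t=0$ (compare how the paper, for Lipschitz data, only recovers $C^{1,\alpha}_{\delta-\alpha}$ with $|\nabla^2\eta|\lesssim\rho^{-\delta-1}$ for $t\geqslant t_0$, and the remark after the appendix proof showing that decay of higher derivatives cannot in general be improved by the flow). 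The rescaled cylinder $B_1(0)\times[0,T/R^2]$ with $|x_0|=R$ reaches the compact set $K$ where the asymptotic coordinates do not exist; you need balls of radius $cR$ with $c<1$. Finally, the gauge-back step is glossed: to keep a $C^2_\delta$ bound for $\phi_t^*\tilde g$ you need weighted control of $\nabla^2 W$, hence of three derivatives of $\tilde g$, which requires an additional smoothing estimate and care as $t\to0$; uniqueness of the bounded-curvature Ricci flow is also silently used to identify the given $g(t)$ with the ungauged $h$-flow. None of these smaller points is fatal, but the circularity in the rescaling step is a genuine gap in the proposed proof.
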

This result could either be proved by systematically working in weighted
spaces as was done in \cite{OW08}, or by using the maximum principle as
in \cite{DM07}. We will use the maximum principle argument to prove a 
slightly different version of this result
in Lemma~\ref{lem:af}. 

Suppose now that the metric $g$ is only $C^0$ on a compact set
$K\subset M$. Using the results of M. Simon~\cite{Sim02} we can find a
short time solution to the Ricci flow with such an initial metric, using 
smooth approximations to $g$. 
\begin{defn}\label{defn:fair}
	Given a constant $\delta \geqslant 1$, a metric $h$ is 
	$\delta$-fair to $g$ if the curvature of
	$h$ is uniformly bounded, and 
	\[ \frac{1}{\delta} h \leqslant g\leqslant \delta h\,
	\text{ on }M.\]
\end{defn}

M. Simon has shown that there exists a solution to the Ricci flow for a
short time, starting with the singular metric $\hat{g}$, if there exists
a smooth metric $h$ which is $1+\epsilon(n)$-fair to $\hat{g}$ for some
universal constant $\epsilon(n)>0$ only depending on the dimension.
More precisely we have
\begin{thm} Fix
a metric $h$ which is $1+\epsilon(n)$-fair to $g$. Then there exists a
family of metrics $g(t)$ for $t\in(0,T]$ with $T>0$,
which solves the $h$-flow on
this time interval. In addition $h$ is $1+2\epsilon(n)$ fair to $g(t)$ for
$t\in(0,T]$ and $g(t)$ converges to $g$ as $t\to 0$, uniformly on
compact sets. 
\end{thm}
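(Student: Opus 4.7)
The plan is to approximate $g$ by smooth metrics, run the $h$-flow from each approximation, obtain uniform a priori estimates, and pass to a limit. The advantage of the $h$-flow (the DeTurck variant of Ricci flow with background metric $h$) over bare Ricci flow is that it is strictly parabolic, so one has a genuine quasilinear PDE to analyze rather than a degenerate one.

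First I would mollify $g$ to obtain a sequence of smooth metrics $g_k \to g$ uniformly on compact sets such that $h$ is $(1+\epsilon(n))$-fair to each $g_k$; if fairness degrades under mollification, one starts from a metric that is $(1+\epsilon(n)/2)$-fair and absorbs the loss. Because $h$ has bounded curvature and $g_k$ is uniformly equivalent to $h$, Shi's theorem gives a smooth short-time solution $g_k(t)$ of
\[
\ddt g_{ij} = -2R_{ij} + \nabla_i W_j + \nabla_j W_i, \qquad W^k = g^{pq}\bigl(\Gamma^k_{pq} - \tilde{\Gamma}^k_{pq}\bigr),
\]
where $\tilde{\Gamma}$ is the Christoffel tensor of $h$. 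Rewriting with respect to the background connection $\tilde{\nabla}$, the equation takes the schematic quasilinear form
\[
\ddt g = g^{ab}\,\tilde{\nabla}_a\tilde{\nabla}_b g + g^{-1}\!\ast g^{-1}\!\ast \tilde{\nabla}g\ast \tilde{\nabla}g + \mathrm{Rm}(h)\ast g,
\]
to which maximum principle arguments apply once the coefficients $g^{ab}$ are known to be uniformly elliptic.

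The central step is to establish a uniform lower bound $T>0$ on the interval of existence, together with preservation of fairness: $h$ remains $(1+2\epsilon(n))$-fair to $g_k(t)$ for $t\in[0,T]$, independently of $k$. I would apply the maximum principle to the extremal eigenvalues of $h^{-1}g_k(t)$, equivalently to $\mathrm{tr}_h g_k$ and $\mathrm{tr}_{g_k}h$. Their evolution equations contain a favorable parabolic term together with reaction terms bounded purely in terms of the current equivalence ratio and the curvature of $h$. Provided $\epsilon(n)$ is chosen small enough (depending only on $n$) that these reaction terms cannot overcome the parabolic contribution when the ratio sits near $1+\epsilon(n)$, one obtains a definite time interval $[0,T]$, with $T$ depending only on $n$, the curvature bound of $h$, and $\epsilon(n)$, over which the fairness ratio degrades at worst from $1+\epsilon(n)$ to $1+2\epsilon(n)$. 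With $C^0$ equivalence uniform in $k$, Shi-type interior estimates adapted to the $h$-flow produce bounds on $\tilde{\nabla}^m g_k(t)$ on compact subsets for $t\in[t_0,T]$ depending only on $t_0$, $h$, and $m$. A diagonal Arzel\`a--Ascoli argument extracts a limit $g(t)$, smooth on $M\times(0,T]$, solving the $h$-flow and inheriting $(1+2\epsilon(n))$-fairness. Uniform convergence $g(t)\to g$ as $t\to 0$ on compact sets follows by splitting $|g(t)-g|\leqslant |g(t)-g_k(t)|+|g_k(t)-g_k|+|g_k-g|$ and using $\ddt g_k \in L^\infty_{\mathrm{loc}}$ together with the approximation property of $g_k$.

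The main obstacle is the fairness preservation step. It is not enough to invoke continuity of eigenvalues: the nonlinear reaction terms in the evolution of $\mathrm{tr}_h g$ and $\mathrm{tr}_{g}h$ must be quantitatively absorbed by the leading elliptic term, and this is precisely what pins down the dimensional constant $\epsilon(n)$. Once this quantitative bound is in place, the remaining pieces---smoothing, Shi-type estimates, and the limit passage---are relatively standard parabolic machinery.
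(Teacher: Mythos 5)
Your overall strategy coincides with the construction the paper attributes to M.~Simon~\cite{Sim02} and sketches immediately after the statement: mollify $g$, run the $h$-flow from each smooth approximation $g_k$, prove a uniform existence time $T>0$ together with preservation of fairness and interior derivative bounds (the paper's estimates \eqref{eq:C2}, $|\nabla^k g_\epsilon(t)|\leqslant C_k t^{-k/2}$), and extract a limit by a diagonal Arzel\`a--Ascoli argument. Your identification of fairness preservation as the crux, obtained from the maximum principle applied to $\mathrm{tr}_h g_k$ and $\mathrm{tr}_{g_k}h$ with $\epsilon(n)$ chosen small so the reaction terms are absorbed, is exactly where the dimensional constant enters in Simon's proof; on a noncompact $M$ you should note that the maximum principle used must be a version valid under the bounded-geometry hypotheses on $h$ (e.g.\ in the spirit of Ecker--Huisken, which the paper invokes elsewhere), but that is routine given the uniform equivalence of the metrics.

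There is, however, one genuine gap: your argument that $g(t)\to g$ uniformly on compact sets as $t\to 0$ does not close as written. In the splitting $|g(t)-g|\leqslant |g(t)-g_k(t)|+|g_k(t)-g_k|+|g_k-g|$, your control of the middle term rests on $\ddt g_k\in L^\infty_{\mathrm{loc}}$, but that bound depends on $k$: it is governed by second derivatives of the initial data $g_k$, which blow up as $k\to\infty$ since $g$ is merely $C^0$. Meanwhile the first term is small only for $k$ large depending on $t$, because the compactness argument gives convergence $g_k(t)\to g(t)$ uniform only on sets of the form $K\times[t_0,T]$ with $t_0>0$. So as $t\to 0$ you are forced to take $k=k(t)\to\infty$, and the middle term, of size roughly $C_{k(t)}\,t$, need not tend to zero --- an order-of-limits problem. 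What is needed, and what Simon supplies, is a modulus-of-continuity estimate at $t=0$ that is \emph{uniform in $k$}: $\sup_K|g_k(t)-g_k(0)|_h\leqslant\omega(t)$ with $\omega(t)\to 0$ depending only on the local modulus of continuity of $g$, on $h$, and on the fairness constant, proved by a local barrier/comparison argument rather than by integrating $\ddt g_k$ in time (the available interior bounds scale like $|\ddt g_k|\lesssim t^{-1}$ uniformly in $k$, which is not integrable at $t=0$). With such an $\omega$ the splitting works: fix $t$, let $k\to\infty$, and conclude $|g(t)-g|\leqslant\omega(t)$. Without it, the final assertion of the theorem is unproved, while the rest of your outline matches the cited construction.
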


The $h$-flow is the Hamilton-DeTurk flow with background metric $h$.
More precisely $g(t)$ satisfies the equation
\[ \frac{d}{dt} g_{ij} = -2R_{ij} + \nabla_iW_j + \nabla_j W_i,\]
where
\[ W_j = g_{jk}g^{pq}(\Gamma^k_{pq} - \tilde{\Gamma}^k_{pq}).\]
The $\tilde{\Gamma}$ are the Christoffel symbols of the metric $h$. This
flow is equivalent to the Ricci flow \eqref{eq:Ricflow} modulo the
action of diffeomorphisms. The advantage of the $h$-flow is that it is
parabolic, while the unmodified Ricci flow 
\eqref{eq:Ricflow} is only weakly parabolic.  

The way the solution $g(t)$ is constructed is by first taking a sequence of
smoothings $g_\epsilon$ converging to $g$. Then we solve the $h$-flow
for a short time with
initial condition $g_\epsilon$ 
for each small $\epsilon$, obtaining families of
metrics $g_\epsilon(t)$. Then the key point is to show that there exists
a $T>0$ independent of $\epsilon$, such that for 
$t\in(0,T]$ one obtains uniform bounds on all derivatives of
the metrics $g_\epsilon(t)$ of the form
\begin{equation}\label{eq:C2}
	\left|\nabla^k g_\epsilon(t)\right|\leqslant
	\frac{C_k}{t^{k/2}}\,\text{ for }t\in(0,T].
\end{equation}
Here the covariant 
derivative and norm is taken with respect to the metric $h$, and the
constants $C_k$ are independent of $\epsilon$. Then the solution $g(t)$
is extracted as a limit of a subsequence as $\epsilon\to 0$, converging
in $C^k$ for all $k$ on compact sets.

In our application the metric $g$ is asymptotically flat, and so we can
choose $h$ to be flat outside a sufficiently large ball. 
If the metric $g$ is Lipschitz, then the estimates on the derivatives
were improved in M. Simon~\cite{Sim05}. Namely from \cite[Lemma
2.1]{Sim05} we have 
\begin{equation}\label{eq:C1}
	\begin{aligned}
		\left|\nabla g_\epsilon(t)\right| &\leqslant C_1,\\
		|\nabla^2 g_\epsilon(t)| &\leqslant \frac{C_2}{\sqrt{t}},
		\text{ for } t\in(0,T], 
	\end{aligned}
\end{equation}
with $C_1, C_2$ independent of $\epsilon$. Note that here we are
applying Lemma 2.1 from~\cite{Sim05} to the smooth initial metrics
$g_\epsilon$ and if $g$ is Lipschitz, then their smoothings $g_\epsilon$
satisfy a uniform $C^1$ bound, independent of $\epsilon$.

\section{The mass is constant along the Ricci flow}\label{sec:mass}
In this section we will consider a solution of the Ricci flow given by
Theorem~\ref{thm:AFRF}, starting with an asymptotically flat metric
$\hat{g}$ in $C^2_\delta$, 
which has integrable scalar curvature. So we have $g(t)$ for
$t\in[0,T]$ such that $g(0)=\hat{g}$. By choosing $T$ smaller if
necessary, we can
assume that $T\leqslant 1$, and also that outside a ball
$B\subset\mathbf{R}^n$ we have
\[ \frac{1}{2}\delta_{ij} < g_{ij}(t) < 2\delta_{ij} \]
for $t\in[0,T]$. Without loss of generality we can take $B$ to be the
unit ball. In addition we 
have constants $\kappa>0$ and $\delta > (n-2)/2$ such
that on $\mathbf{R}^n\setminus B$ we have
\[ \begin{aligned}
	|g_{ij}(t) - \delta_{ij}| &< \kappa|x|^{-\delta} \\
	|\nabla g_{ij}(t)| &< \kappa|x|^{-\delta-1}\\
	|\nabla^2 g_{ij}(t)| &< \kappa|x|^{-\delta-2}.
\end{aligned}\]
Finally, we will assume that we have a uniform lower bound on the scalar
curvature of $\hat{g}$, and a bound for its $L^1$ norm, i.e. for some $K
> 0$ and the $\kappa$ from above, we have
\[ \begin{gathered} R(\hat{g}) > -K \\
	\int_M |R(\hat{g})| \,dV < \kappa.
\end{gathered}\]
The constants below will depend on $\kappa, K$ and $\delta$. It will be
important that the constant in Lemma~\ref{lem:Rneg} below only depends on $K$. 
The evolution equation of the scalar curvature is (see
Hamilton~\cite{Ham82})
\[ \frac{\partial}{\partial t} R = \Delta R + 2|Ric|^2. \]
Since $R(g(t))$ decays at infinity, we can use
the maximum principle to see that $R(g(t))> -K$ for all $t$.

We first construct some cutoff functions that we will use later. 

\begin{lem}\label{lem:cutoff}
	For any $r_1,r_2$ with $1 < r_1$ and $2r_1 < r_2$,
	there exists a function
	\[ f_{r_1,r_2} : M \to [0,2] \]
	such that
	\[ \begin{gathered}
		f_{r_1,r_2}(x)=1/r_1^2, \text{ if }\, |x| < r_1, \\
		f_{r_1,r_2}(x)\geqslant 1, \text{ if }\, 2r_1 \leqslant
		|x|\leqslant r_2,\\
		f_{r_1,r_2}(x)\leqslant |x|^{-n-1}, \text{ for }\,
		|x|> 2r_2.
	\end{gathered}\]
	In addition there is a constant $C$ independent of $r_1,r_2$ such
	that
	\[ \Delta f_{r_1,r_2} \leqslant Cf_{r_1,r_2},\]
	where $\Delta$ is the Laplacian with respect to any of the
	metrics $g(t)$, $t\in[0,T]$. 
\end{lem}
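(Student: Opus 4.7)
\emph{Proof Plan.} The plan is to build $f_{r_1,r_2}$ as a radial function $f(r)$ of $r = |x|$, glued from three main pieces: the inner constant $f \equiv 1/r_1^2$ on $\{r \leq r_1\}$; a middle piece of constant value $1$ on $[2r_1, r_2]$; and an outer power-law piece $f(r) = (r_2/r)^{\beta}$ on $\{r \geq r_2\}$, with
\[ \beta = \beta(r_2) := (n+1)\bigl(1 + \log_2 r_2\bigr). \]
On the inner transition $[r_1, 2r_1]$, interpolate via $f(r) = 1/r_1^2 + (1 - 1/r_1^2)\phi\bigl((r-r_1)/r_1\bigr)$ for a fixed smooth monotone $\phi:[0,1]\to[0,1]$; then $f \geq 1/r_1^2$ throughout, both $|\nabla f|$ and $|\nabla^2 f|$ are $O(1/r_1^2)$, so $\Delta f \leq C f$ with a universal $C$ on this interval. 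The inner constant piece satisfies $\Delta f = 0$. The corner at $r = r_2$ (left derivative $0$, right derivative $-\beta/r_2$) is concave, hence its distributional contribution to $\Delta f$ is nonpositive; to produce a genuinely smooth $f$, one takes the middle value slightly above $1$ and mollifies on a short interval about $r_2$, preserving $f \geq 1$ on $[2r_1, r_2]$.

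The choice of $\beta$ is forced by the matching condition at $r = 2r_2$: one needs $f(2r_2) = 2^{-\beta} \leq (2r_2)^{-(n+1)}$, and equality pins down $\beta$. Since $\beta > n+1$, the ratio $f(r)/r^{-(n+1)}$ is decreasing in $r$ and equals $1$ at $r = 2r_2$, so $f(r) \leq r^{-(n+1)}$ for all $r \geq 2r_2$, as required. For the flat-space Laplacian, $\Delta_0\, r^{-\beta} = \beta(\beta - n + 2)\, r^{-\beta-2}$, giving
\[ \frac{\Delta_0 f}{f} \;=\; \frac{\beta(\beta - n + 2)}{r^2} \;\leq\; \frac{\beta^2}{r_2^2} \;=\; \frac{(n+1)^2\bigl(1 + \log_2 r_2\bigr)^2}{r_2^2} \quad \text{on } r \geq r_2, \]
and the function $r_2 \mapsto (1+\log_2 r_2)^2/r_2^2$ is uniformly bounded on $[1,\infty)$. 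By Theorem~\ref{thm:AFRF}, $g(t)$ is uniformly $C^2_\delta$-close to the flat metric outside the unit ball, so $|\Delta_{g(t)} f - \Delta_0 f| \leq C r^{-\delta}\bigl(|\nabla^2 f| + |\nabla f|/r\bigr)$, which on $\{r \geq r_2\} \subseteq \{r\geq 1\}$ is $O\bigl(r^{-\delta}\beta^2 f / r^2\bigr) \leq C f$, uniformly in $r_1,r_2,t\in[0,T]$.

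The main obstacle is the outer decay. A generic smooth interpolation from $f \geq 1$ at $r_2$ down to $f \leq (2r_2)^{-(n+1)}$ at $2r_2$, on an interval of length $\sim r_2$, has derivatives of the natural orders $1/r_2$ and $1/r_2^2$; near $r = 2r_2$ this forces $\Delta f / f \gtrsim r_2^{n-1}$, which blows up with $r_2$. The logarithmic exponent $\beta = O(\log r_2)$ is exactly what is needed: it is large enough that $(r_2/r)^\beta$ drops by a factor $r_2^{n+1}$ between $r_2$ and $2r_2$, yet small enough that $\beta^2/r_2^2 = O\bigl((\log r_2/r_2)^2\bigr)$ remains universally bounded.
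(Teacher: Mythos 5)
Your plan is correct, but it takes a genuinely different route from the paper in the outer region, and the comparison is instructive. The paper builds $f_{r_1,r_2}=g_{r_1}h_{r_2}$ as a product of two \emph{rescaled fixed profiles}: $g_{r_1}(x)=g(x/r_1)-1+r_1^{-2}$ produces the inner plateau, and $h_{r_2}(x)=h(x/r_2)$, where $h$ is a fixed function equal to $|x|^{-n-1}$ outside $B_2$, produces the decay. The mechanism there is scale monotonicity: if a fixed profile $u$ satisfies $|\nabla^2 u|\leqslant Cu$, then $u(x/r)$ satisfies the same inequality with the same $C$ for all $r\geqslant 1$, since rescaling multiplies the Hessian by $r^{-2}$; moreover $\nabla g_{r_1}$ and $\nabla h_{r_2}$ have disjoint supports ($r_1\leqslant |x|\leqslant 2r_1$ versus $|x|\geqslant r_2>2r_1$), so the product inherits the inequality with no cross terms and no logarithm anywhere. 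Your single radial profile with exponent $\beta\sim(n+1)\log_2 r_2$ is forced precisely because you insist on the \emph{literal} bound $f\leqslant|x|^{-n-1}$ beyond $2r_2$: dropping by a factor $r_2^{n+1}$ across one dyadic interval requires $\beta\gtrsim\log r_2$, and your observation that $\beta(\beta-n+2)/r_2^2=O\bigl((\log r_2/r_2)^2\bigr)$ remains universally bounded is exactly the right computation, as is the monotonicity argument pinning $f(r)r^{n+1}\leqslant 1$ for $r\geqslant 2r_2$. Amusingly, your version proves the lemma as literally stated, whereas the paper's rescaled profile only yields $f\leqslant(1+r_1^{-2})\,r_2^{n+1}|x|^{-n-1}$ for $|x|>2r_2$; this discrepancy is harmless, because the only use of the outer decay (in Lemma~\ref{lem:intRdecay}) is integrability of $f_{r_1,r_2}u$ at \emph{fixed} $r_1,r_2$ against bounded $u$ — which is also why your more delicate logarithmic construction, while valid, buys generality the application does not need. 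Two small points to tidy: on the inner transition $[r_1,2r_1]$ your interpolation has $|\nabla f|=O(1/r_1)$, not $O(1/r_1^2)$; the conclusion $\Delta f\leqslant Cf$ survives because the first-order terms of $\Delta_{g(t)}$ there carry decaying coefficients — the radial factor $(n-1)/r=O(r_1^{-1})$ and the Christoffel terms $O(\rho^{-\delta-1})$ — so their contribution is $O(r_1^{-2})\leqslant Cf$ (the paper glosses over the same point, stating only Hessian bounds). And since the asymptotic coordinates exist only on $M\setminus K$, you should note, as the paper does, that your radial function transfers to $M$ precisely because it is constant on $\{|x|<r_1\}$ with $r_1>1$; your mollification at the concave corner near $r_2$ is routine as you describe, provided you shift the outer profile slightly (or enlarge $\beta$ marginally) so that both $f\geqslant 1$ at $r_2$ and $f\leqslant|x|^{-n-1}$ at $2r_2$ survive the smoothing.
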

\begin{proof}
	Let us begin by letting $\phi:[0,\infty)\to [1,2]$ be a
	smooth function such that $\phi(x)=1$ for $x < 1$ and
	$\phi(x)=2$ for $x > 2$. Then define the function
	$g:\mathbf{R}^n \to [1,2]$ by letting $g(x) = \phi(|x|)$. 
	This function satisfies $|\nabla^2g|\leqslant C$ for some $C$.
	Now define
	\[ g_{r_1}(x) = g\left(r_1^{-1}x\right) -1 + \frac{1}{r_1^2}.\]
	Then $|\nabla^2 g_{r_1}|\leqslant r_1^{-2}C$ and
	$g_{r_1}\geqslant r_1^{-2}$ everywhere, so these functions satisfy
	\[ |\nabla^2 g_{r_1}| \leqslant Cg_{r_1}.\]
	
	We will now define another set of functions $h_{r_2}$ on
	$\mathbf{R}^n$. First
	we define $h:\mathbf{R}^n\to [0,1]$ 
	to be such that $h(x)=1$ for $|x|<1$ and
	$h(x)=|x|^{-n-1}$ for $|x|>2$. Then we can check that $|\nabla^2
	h|\leqslant Ch$ for some $C$. Now define
	\[ h_{r_2}(x) = h(r_2^{-1}x),\]
	from which it follows that as long as $r_1 \geqslant 1$, we have
	\[|\nabla^2 h_{r_2}|\leqslant Ch_{r_2}.\]

	Finally, if $r_1 > 1$ and $r_2 > 2r_1$,
	we define $f_{r_1,r_2}:\mathbf{R}^n\to[0,2]$ to be
	\[ f_{r_1,r_2}(x) = g_{r_1}(x) h_{r_2}(x).\]
	Then $f_{r_1,r_2}(x) = g_{r_1}(x)$ for $|x| < r_2$, and
	$f_{r_1,r_2}(x) = (1+r_1^{-2})h_{r_2}(x)$ for $|x| > 2r_1$. It
	follows that the inequality
	\[ |\nabla^2 f_{r_1,r_2}|\leqslant Cf_{r_1,r_2}\]
	holds on $\mathbf{R}^n$. 

	We now simply transfer this function to the asymptotically flat
	manifold using the asymptotic coordinates. This is possible
	since $f_{r_1,r_2}$ is constant in the ball of radius $r_1$. The
	inequality
	\[ \Delta f_{r_1,r_2} \leqslant Cf_{r_1,r_2}\]
	(with a larger choice of $C$)
	follows because in the asymptotic coordinates the metric $g(t)$
	is uniformly equivalent to the Euclidean metric.
\end{proof}

We will also use the following simple ODE result several times.

\begin{lem}\label{lem:ODE}
	Suppose that the function $F(t)$ satisfies
	\[ \frac{d}{dt} F(t) \leqslant AF(t) + B,\]
	for some constants $A,B\geqslant 0$. Then for $t\in[0,1]$ we have
	\[ F(t) \leqslant e^AF(0) + Be^A. \]
\end{lem}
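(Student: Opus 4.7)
The statement is essentially a form of Grönwall's inequality, so the plan is to use an integrating factor to straighten out the differential inequality, then integrate and simplify using $t \in [0,1]$.

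First I would introduce $G(t) = e^{-At} F(t)$. A direct computation gives
\[ G'(t) = e^{-At}\bigl(F'(t) - A F(t)\bigr) \leqslant e^{-At} B, \]
using the hypothesis $F'(t) \leqslant A F(t) + B$. Integrating this inequality from $0$ to $t$ yields
\[ e^{-At} F(t) - F(0) \leqslant B \int_0^t e^{-As}\, ds. \]
In the case $A > 0$ the right-hand integral equals $A^{-1}(1 - e^{-At})$, which gives
\[ F(t) \leqslant e^{At} F(0) + \frac{B}{A}\bigl(e^{At} - 1\bigr). \]
In the case $A = 0$, one just gets $F(t) \leqslant F(0) + Bt$, which plainly satisfies the claimed bound on $[0,1]$.

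Finally, to reduce to the clean bound stated in the lemma, I would use the elementary estimate $e^{At} \leqslant e^A$ for $t \in [0,1]$, together with $A^{-1}(e^{At}-1) \leqslant e^{At} \leqslant e^A$, which follows from $e^{At}-1 = \int_0^{At} e^s\, ds \leqslant At\, e^{At} \leqslant A e^{At}$. Combining these gives $F(t) \leqslant e^A F(0) + B e^A$ for all $t \in [0,1]$, as required. There is no real obstacle here; the only minor point to be careful about is treating $A = 0$ separately (so one does not divide by zero in the integration step), which is handled by the direct computation above.
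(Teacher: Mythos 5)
Your proof is correct, and in fact the paper states this lemma without any proof at all (it is introduced only as a ``simple ODE result''), so there is no argument of the authors' to compare against; your integrating-factor computation is the standard and surely intended one, including the sensible separate treatment of $A=0$. One point worth flagging: your final simplification $e^{At}F(0)\leqslant e^{A}F(0)$ tacitly uses $F(0)\geqslant 0$. Without that sign assumption the lemma as stated is actually false --- take $F(t)=-e^{t}$, which satisfies $F'=AF+B$ with $A=1$, $B=0$, yet $F(t)=-e^{t}>-e=e^{A}F(0)+Be^{A}$ for all $t\in[0,1)$ --- so this is a defect of the statement rather than of your argument. It is harmless in context: in every application in the paper (Lemmas~\ref{lem:intRdecay} and~\ref{lem:Rneg}) the quantity $F$ is an integral of a nonnegative integrand, so $F(0)\geqslant 0$ holds and your proof applies verbatim. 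If you wanted your write-up to cover the stated generality, you could simply record the sharper intermediate conclusion $F(t)\leqslant e^{At}F(0)+\tfrac{B}{A}\bigl(e^{At}-1\bigr)$ (with the $A=0$ case read as $F(0)+Bt$), or replace $F(0)$ by $\max\{F(0),0\}$ in the final bound.
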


We now study the integrability of the scalar curvature along the Ricci
flow. The following lemma will also be useful when looking at the evolution
of the ADM mass. 
\begin{lem}\label{lem:intRdecay}
	Suppose that
	\[ \int_{M\setminus B_r}|R(\hat{g})|dV<\eta(r)\] 
	with $\eta(r)$ going to zero as $r\rightarrow\infty$. 
	Then there exists a function $\tilde{\eta}(r)$ depending on
	$\eta, \kappa, K$ and $\delta$, but not on $t$, such that
	for $t\in[0,T]$ we have
	\[ \int_{M\setminus B_r}|R(g(t))|dV<\tilde\eta(r),\]
	and $\tilde\eta(r)$ goes to zero as $r\rightarrow\infty$. In
	particular $R(g(t))\in L^1$ for $t\in[0,T]$.  
\end{lem}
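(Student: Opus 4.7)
The plan is to multiply $R$ by the cutoff $\phi = f_{r_1,r_2}$ of Lemma~\ref{lem:cutoff}, whose essential feature $|\Delta\phi|\leq C\phi$ will make the integrated evolution equation amenable to Lemma~\ref{lem:ODE}. Splitting $|R| = R + 2R^-$ with $R^- = \max(-R,0)$, I track the two quantities
$$F(t) = \int_M \phi R(g(t))\, dV, \qquad G(t) = \int_M \phi R^-(g(t))\, dV.$$
Since $\phi \geq 1$ on $\{2r_1 \leq |x| \leq r_2\}$, controlling $F(t) + 2G(t) = \int_M \phi|R(g(t))|\, dV$ for large $r_1$ will control $\int_{|x|>2r_1}|R(g(t))|\, dV$. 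The bounds $\phi\leq 1/r_1^2$ on $B_{r_1}$ and $\phi\leq 2$ elsewhere give both $|F(0)|$ and $G(0)$ dominated by $\kappa/r_1^2 + 2\eta(r_1)$, which vanishes as $r_1\to\infty$.

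For $G$, the evolution $\partial_t R = \Delta R + 2|Ric|^2 \geq \Delta R$ together with Kato's inequality yields the weak subsolution bound $\partial_t R^-\leq \Delta R^-$. Differentiating $G$, using $\partial_t dV = -R\, dV$ along with $R\cdot R^- = -(R^-)^2$ and $R^-\leq K$, and then integrating by parts with $|\Delta\phi|\leq C\phi$, produces $G'(t)\leq (C+K)G(t)$. By Lemma~\ref{lem:ODE}, $G(t)\leq e^{(C+K)T}G(0)$, which is small for large $r_1$.

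For $F$, a parallel computation yields
$$F'(t) = \int_M (\Delta\phi) R\, dV + 2\int_M \phi|Ric|^2\, dV - \int_M\phi R^2\, dV.$$
Using $\int\phi|R| = F+2G\leq |F|+2G$, the bound $|\Delta\phi|\leq C\phi$, and a uniform $L^\infty$ estimate $\|R(g(t))\|_\infty \leq K'$ (coming from the $C^2_\delta$ control of Theorem~\ref{thm:AFRF}) to bound $\int \phi R^2 \leq K'\int\phi|R|$, I get $|F|' \leq (C+K')|F| + B(r_1,r_2)$, where $B$ is dominated by $G$ and by $\int\phi|Ric|^2\, dV$. This last quantity is small for large $r_1$ because Theorem~\ref{thm:AFRF} combined with $\delta > (n-2)/2$ makes $|Ric|^2$ uniformly integrable. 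Lemma~\ref{lem:ODE} applied once more bounds $|F(t)|$, and combining with the bound on $G$ produces the desired $\tilde\eta(r_1,r_2)$; letting $r_2\to \infty$ and relabeling $r = 2r_1$ gives the stated decay. The principal technical subtlety is the non-smoothness of $R^-$ in the Kato step, which can be sidestepped by smoothly approximating $|R|$ by $\sqrt{R^2+\varepsilon^2}$ and passing to the limit $\varepsilon\to 0$.
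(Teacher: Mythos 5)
Your plan is, at its core, the paper's own proof: the paper evolves the single quantity $\int_M f_{r_1,r_2}\,u\,dV$ with $u=\sqrt{R^2+\epsilon}$ (which is exactly your $F+2G$ after $\epsilon\to 0$), uses $\partial_t u\leqslant \Delta u + 2|Ric|^2$, integrates by parts against the cutoff of Lemma~\ref{lem:cutoff}, absorbs the volume-form term via $-uR\leqslant Ku$ using only the lower bound $R>-K$, and applies Lemma~\ref{lem:ODE}; your initial-data estimate $\kappa/r_1^2+2\eta(r_1)$, the smallness of $\int \phi|Ric|^2$ from $\delta>(n-2)/2$, and the final step of letting $r_2\to\infty$ and setting $r=2r_1$ are all identical. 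However, your split into $F$ and $G$ introduces two genuine defects. First, Lemma~\ref{lem:cutoff} provides only the one-sided bound $\Delta f_{r_1,r_2}\leqslant Cf_{r_1,r_2}$. That suffices for your $G$-estimate (where the integrand $R^-\geqslant 0$) and for the paper's positive $u$, but your $F$-estimate needs the two-sided bound $|\Delta f_{r_1,r_2}|\leqslant Cf_{r_1,r_2}$, since $R$ changes sign in $\int(\Delta\phi)R\,dV$. The two-sided bound is in fact true --- the proof of Lemma~\ref{lem:cutoff} establishes the Euclidean Hessian bound $|\nabla^2 f_{r_1,r_2}|\leqslant Cf_{r_1,r_2}$, and the first-order term $g^{ij}\Gamma^k_{ij}\partial_k f_{r_1,r_2}$ is also $O(f_{r_1,r_2})$ by the decay of the Christoffel symbols --- but as cited your step does not follow, and this verification must be supplied.

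Second, and more seriously relative to the statement: your estimate $\int\phi R^2 \leqslant K'\int\phi|R|$ imports a uniform upper bound $K'$ on $\Vert R(g(t))\Vert_{\infty}$, which then enters the Gr\"onwall constant. The lemma asserts that $\tilde\eta$ depends only on $\eta,\kappa,K,\delta$; the section's hypotheses bound $R(\hat{g})$ only from below and give curvature decay only outside the unit ball, so $K'$ is an extra constant not controlled by the stated data. This uniformity is not cosmetic: in the paper's application to Simon's flow from the smoothings $\hat{g}_\epsilon$, upper curvature bounds degenerate as $t\to 0$, which is precisely why the paper is careful to use only the lower bound $R>-K$ (compare the remark before Lemma~\ref{lem:Rneg}). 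The fix is immediate: when bounding $F'$ from above, simply discard $-\int\phi R^2\,dV\leqslant 0$; an upper bound on $F$ is all you need, since $\int\phi|R|=F+2G$ with $G\geqslant 0$ controlled separately via $(R^-)^2\leqslant KR^-$, so there is no need to estimate $|F|'$ at all. With these two repairs your argument is correct and reduces to the paper's proof.
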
	
\begin{proof}
	Let us write $\hat{R}$ for $R(\hat{g})$ and $R$ for $R(g(t))$. 
	For any $\epsilon > 0$ define the function
	(following~\cite{Yok08}) 
	\[ u = \sqrt{R^2+\epsilon}.\]
	We can compute that along the Ricci flow
	\[\begin{aligned}
		\frac{\partial u}{\partial t} &= u^{-1}R(\Delta R +
		2|Ric|^2) \\ 
		\Delta u &= -u^{-3}R^2|\nabla R|^2 + u^{-1}|\nabla R|^2 +
		u^{-1}R\Delta R \geqslant u^{-1}R\Delta R,
	\end{aligned}\]
	where we used that $u^{-2}R^2 \leqslant 1$. It follows that
	\begin{equation}\label{eq:dudt}
		\frac{\partial u}{\partial t} \leqslant \Delta u +
		2|Ric|^2.
	\end{equation}
	
	We will now use the cutoff functions $f_{r_1,r_2}$ from
	Lemma~\ref{lem:cutoff}. 
	Since $u$ is bounded, the decay of $f_{r_1,r_2}$ at infinity implies
	that $f_{r_1,r_2}u$ is integrable for all $t$. We can compute
	\[ \frac{d}{dt} \int_M f_{r_1,r_2}u\,dV \leqslant 
		\int_M f_{r_1,r_2}(\Delta u +
		2|Ric|^2 - uR)\,dV. \]
	We have that $R>-K$ for all $t$, and in addition the decay
	condition on the metric implies that outside the ball $B$ we
	have $|Ric|^2 < C_0|x|^{-2\delta - 4} < C_0|x|^{-n-2}$ for some
	constant $C_0$. The integral of $|Ric|^2$ outside $B_{r_1}$ is
	therefore of the order $r_1^{-2}$. 
	It follows that we have
	\begin{equation} \begin{aligned}
		\frac{d}{dt}\int_M f_{r_1,r_2}u\,dV 
		&\leqslant \int_M f_{r_1,r_2}\Delta u +
		Cf_{r_1,r_2}u\,dV + \frac{C_1}{r_1^2}+
		4\int_{M\setminus B_{r_1}}|Ric|^2dV \\
		&\leqslant \int_M u\Delta f_{r_1,r_2} + 
		Cf_{r_1,r_2}u\,dV + C_2r_1^{-2} \\
		&\leqslant 2C\int_M f_{r_1,r_2}u\,dV + 
		C_2r_1^{-2}. 
	\end{aligned}\end{equation}
	It follows using Lemma~\ref{lem:ODE} that
	\[ \int_M f_{r_1,r_2}u\,dV \leqslant C_3\int_M f_{r_1,r_2}
	\sqrt{\hat{R}^2+\epsilon}\,dV + C_4r_1^{-2}. \]
	Letting $\epsilon \to 0$, this implies
	\[ \int_M f_{r_1,r_2} |R|\,dV \leqslant C_3\int_M f_{r_1,r_2} 
	|\hat{R}|\,dV 
	+C_4r_1^{-2}.\]
	Finally, noticing that
	\[\int_{B_{r_2}\setminus B_{2r_1}}|R|dV\leqslant
	\int_M f_{r_1,r_2} |R|\,dV,\]
	we let $r_2\rightarrow\infty$ to get
	\[\begin{aligned}
		\int_{M\setminus B_{2r_1}}|R|dV 
		&\leqslant C_3\int_{B_{r_1}} \frac{1}{r_1^2} |\hat{R}|\,dV
		+2C_3\int_{M\setminus B_{r_1}}|\hat{R}|dV+C_4r_1^{-2}\\
		&\leqslant C\left(r_1^{-2} + \eta(r_1)\right).
	\end{aligned}\]
	This proves the result we want, with $\tilde{\eta}(2r) =
	C(r^{-2} + \eta(r))$. 
\end{proof}

\begin{rem} The fact that the scalar curvature remains integrable for positive time
along the Ricci flow was shown under stronger decay conditions on the
metric by Dai-Ma~\cite{DM07} and Oliynyk-Woolgar~\cite{OW08}. Also, if
we only wanted to show that $R(g(t))\in L^1$ for $t > 0$, then we could
use a simpler argument using the cutoff functions $h_r$ from the proof
of Lemma~\ref{lem:cutoff}. 
\end{rem}

\begin{lem}\label{lem:nablaR}
	For any $t>0$ we have
	\[ \lim_{r\to \infty} \int_{\partial B_r}|\nabla R|\,dS = 0.\]
	Moreover if $t_0 > 0$ then 
	this convergence is uniform for $t\in [t_0, T]$. 
\end{lem}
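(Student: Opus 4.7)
The plan is to establish pointwise decay of $|\nabla R|(x,t)$ in the asymptotic region, uniformly for $t\in[t_0,T]$, strong enough that the surface integral on $\partial B_r$ vanishes in the limit.

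First I would use Theorem~\ref{thm:AFRF}, which guarantees that the solution $g(t)$ remains asymptotically flat of order $\delta$ in $C^2_\delta$ uniformly for $t\in[0,T]$. Consequently the full curvature tensor satisfies a pointwise bound $|Rm(g(t))|(x)\leqslant C|x|^{-\delta-2}$ in the asymptotic region, with a constant independent of $t$. The next step is to pass from the curvature bound to a bound on its covariant derivative, using Shi's local derivative estimates~\cite{Shi89} for the Ricci flow. Applied on a parabolic cylinder $B(x_0,|x_0|/4)\times[0,T]$ centered at a point $x_0$ with $|x_0|$ large, where the supremum of the curvature is bounded by $K_0=C|x_0|^{-\delta-2}$, Shi's estimate yields, for $t\in[t_0,T]$,
\[ |\nabla R|(x_0,t) \leqslant C(n,t_0)\,K_0 = C(n,t_0)|x_0|^{-\delta-2}, \]
with a constant independent of $x_0$ and $t$.

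Once such a pointwise estimate is established, integrating over $\partial B_r$ and using $|\partial B_r|\leqslant Cr^{n-1}$ gives a bound of the form
\[ \int_{\partial B_r}|\nabla R|\,dS \leqslant C(n,t_0)\,r^{n-3-\delta}, \]
which tends to zero as $r\to\infty$, uniformly in $t\in[t_0,T]$, as soon as the decay exponent is sufficient.

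The main obstacle I anticipate is to obtain a sharp enough decay rate, since multiplying the pointwise bound by the area $O(r^{n-1})$ of $\partial B_r$ requires the exponent of $|x|^{-1}$ to exceed $n-1$. The bare Shi bound $|\nabla R|=O(|x|^{-\delta-2})$ is enough in low dimensions, but when $\delta$ is close to $(n-2)/2$ one has to gain an extra power. The natural route is to bootstrap the regularity of the Ricci flow solution in weighted H\"older spaces, exploiting that the initial metric is in $C^2_\delta$ and that the flow is parabolic, or to exploit the PDE structure of $\partial_t R=\Delta R+2|Ric|^2$, treating $R$ as a solution of a heat equation with source $|Ric|^2$ of faster spatial decay, to upgrade the bound to $|\nabla R|\leqslant C|x|^{-\delta-3}$. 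With such an improved decay, the surface integral estimate $O(r^{n-4-\delta})\to 0$ then follows from $\delta>(n-2)/2$ in the dimensions covered by the theorems of the paper, and the uniformity in $t\in[t_0,T]$ is inherited from the $t$-independence of the constants in Shi's estimate once $t\geqslant t_0$.
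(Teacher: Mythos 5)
Your strategy cannot be repaired in the generality needed, and the failure point is exactly the one you flag as the ``main obstacle.'' The extra power of decay you propose to gain --- upgrading $|\nabla R|=O(|x|^{-\delta-2})$ to $O(|x|^{-\delta-3})$ for $t>0$ by a parabolic bootstrap --- is not available: the heat flow does not improve the spatial decay rate of derivatives. This is precisely the content of the Remark at the end of the Appendix of this paper: for the one-dimensional heat equation with initial data $f(x,0)=\sin(x)/(1+x^2)$, every derivative $\partial_x^k f(\cdot,t_0)$ still decays only like $|x|^{-2}$ for each fixed $t_0>0$, because oscillation prevents differentiation from gaining decay and the flow cannot restore it. For the same reason Shi's local estimates, applied on $B(x_0,|x_0|/4)\times[0,T]$ with $t\geqslant t_0$, give only $|\nabla Rm|\lesssim K_0\left(|x_0|^{-1}+t_0^{-1/2}+K_0^{1/2}\right)=O(|x_0|^{-\delta-2})$ --- on a fixed finite time interval the $t_0^{-1/2}$ term dominates, so no factor $|x_0|^{-1}$ is gained. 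Moreover, even the hoped-for bound $O(|x|^{-\delta-3})$ yields $\int_{\partial B_r}|\nabla R|\,dS=O(r^{n-4-\delta})$, which diverges once $n\geqslant 7$ and $\delta$ is close to $(n-2)/2$; since Theorem~\ref{thm:mass-flow} and the spin case of the positive mass theorem are needed in all dimensions, pointwise decay alone cannot prove the lemma. There is also a smaller regularity issue at the start: Theorem~\ref{thm:AFRF} controls $g(t)$ only in $C^2_\delta$, so it does not by itself bound $\nabla R$, which involves three derivatives of the metric.

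The missing ingredient is the hypothesis $R(\hat{g})\in L^1$, which your argument never uses --- and indeed, by the exponent count above, the statement is false for a general $C^2_\delta$-asymptotically flat flow without it. The paper's proof runs through Lemma~\ref{lem:intRdecay}: the tail $\int_{M\setminus B_r}|R(g(t))|\,dV\leqslant\tilde{\eta}(r)$ tends to zero as $r\to\infty$, uniformly in $t$. Then, on parabolic cylinders of fixed small radius centered near $\partial B_a$ (where all metrics $g(t)$, $t\in[t_0/2,T]$, are uniformly controlled), the local maximum principle for $\partial_t R=\Delta R+2|Ric|^2$ bounds $\sup|R|$ by the local $L^1$ norm of $R$ plus $\sup|Ric|^2$; interior parabolic $L^p$ and Sobolev estimates then bound $\sup|\nabla R|$ on smaller cylinders; and a covering of $\partial B_a$ by balls of bounded overlap converts this into
\[ \int_{\partial B_a}|\nabla R|\,dS \leqslant C(n,r)\left(\int_{A(a,r)}|R|\,dV+\sup_{A(a,r)}|Ric|^2\right), \]
with constants independent of $a$ and of $t\in[t_0,T]$, whence the right-hand side tends to zero as $a\to\infty$. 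If you want to salvage your outline, replace the decay bootstrap by this $L^1$-to-sup mechanism: it is the integrability of the scalar curvature, not its pointwise decay, that drives the surface integral to zero.
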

\begin{proof}
	We first use the local maximum principle for parabolic
	equations (Theorem 7.36 in Lieberman~\cite{Lieb96})
	to obtain pointwise bounds for $R$ depending on the $L^1$
	bound. We could also use the local estimate obtained by Moser
	iteration \cite[Theorem 6.17]{Lieb96}. We will use the equation 
	\[ \frac{\partial}{\partial t} R = \Delta R + 2|Ric|^2.\]
	In addition we work outside the ball $B$, where the asymptotic
	coordinates $x_i$ are defined. 
	As long as $t\in[t_0/2,T]$, the metrics along the flow are all uniformly
	equivalent to the Euclidean metric outside $B$, and also their
	derivatives are controlled. This means that 
	the ellipticity of $\Delta$ and the Christoffel symbols (which
	appear in the first order derivative term in $\Delta R$) 
	are controlled uniformly. Let $\tau\in[t_0,T]$ 
	and $p\in M\setminus B$, and choose $r$ with  
	$0 < r \leqslant\sqrt{\tau-t_0/2}$. 
	Then we apply the local maximum principle to the
	parabolic cylinder 
	\[ Q(r)=\{(x,t)\,:\, |x-p|<r, \tau-r^2 < t < \tau\},\]
	so we obtain
	\[ \sup_{Q(r/2)} |R| \leqslant C_r\left( \int_{Q(r)} |R|\,dV +
	\sup_{Q(r)} |Ric|^2\right),\]
	where $C_r$ is a constant depending on $r$.

	We now apply local $L^p$ estimates and the Sobolev inequalities 
	on the parabolic cylinder
	$Q(r/2)$. Note that on this cylinder as above,
	we control all derivatives of the metric. It follows that we get
	\[ \sup_{Q(r/4)} |\nabla R| \leqslant C'_r\left(\sup_{Q(r/2)}|R| +
	\sup_{Q(r/2)} |Ric|^2\right),  \]
	where again $C'_r$ depends on $r$. In sum we obtain that
	\begin{equation}\label{eq:gradR}
		\sup_{Q(r/4)}|\nabla R| \leqslant C(r) \left(\int_{Q(r)}
		|R|\,dV + \sup_{Q(r)}|Ric|^2\right). 
	\end{equation}

	Now we cover the sphere $\partial B_a$ with balls $B_i$ of radius $r/4$.
	This can be achieved so that each point is covered by at most
	$c(n)$ of the balls $4B_i$, where
	$c(n)$ depends on the dimension. Applying (\ref{eq:gradR}) and
	integrating, we find that
	\[ \int_{\partial B_a} |\nabla R(\tau)|\,dS \leqslant
	C(n,r)\left( \int_{A(a,r)} |R|\,dV + \sup_{A(a,r)}|
	Ric|^2\right),\]
	where $A(a,r)$ is the annulus
	\[ A(a,r) = \{ (x,t)\,:\, d(x,\partial B_a) < r,\, \tau-r^2 < t<
	\tau\}.\]
	The key point is that $C(n,\tau)$ does not depend on $a$ so we can
	let $a\to\infty$. The integrability result 
	Lemma~\ref{lem:intRdecay}, and
	the fact that the metric is asymptotically flat in
	$C^2_\delta$ for some $\delta > (n-2)/2$, now implies the result
	we want. 
\end{proof}

We can now show that the mass is constant along the Ricci flow. This was
shown under stronger decay conditions by Oliynyk-Woolgar~\cite{OW08} and
Dai-Ma~\cite{DM07}.

\begin{cor}\label{cor:mass-constant}
	The ADM mass is preserved under the Ricci flow. 
\end{cor}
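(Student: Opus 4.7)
The plan is to work from the identity \eqref{eq:mparts}. For each large $r$, write
\begin{equation*}
m(g(t)) = F_r(t) + E_r(t), \qquad F_r(t) := \int_{M\setminus B_r} R(g(t))\,dV_t + \int_{\partial B_r}(g_{ij,j}(t) - g_{jj,i}(t))\,dS^i,
\end{equation*}
where $|E_r(t)| \leqslant C r^{-\lambda}$ uniformly in $t$ by Theorem~\ref{thm:AFRF}. I would differentiate $F_r$ in $t$ using the Ricci-flow identities $\partial_t g_{ij} = -2R_{ij}$, $\partial_t R = \Delta R + 2|Ric|^2$, and $\partial_t\sqrt{|g|} = -R\sqrt{|g|}$, obtaining
\begin{equation*}
\frac{d}{dt} F_r(t) = \int_{M\setminus B_r}(\Delta R + 2|Ric|^2 - R^2)\,dV - 2\int_{\partial B_r}(R_{ij,j} - R_{jj,i})\,dS^i.
\end{equation*}

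The second step is to exhibit cancellation between the two terms on the right. By the contracted second Bianchi identity $\nabla^j R_{ij} = \tfrac{1}{2}\nabla_i R$; writing this in coordinates and using the $C^2_\delta$ asymptotics of $g(t)$ from Theorem~\ref{thm:AFRF}, one obtains $R_{ij,j} - R_{jj,i} = -\tfrac{1}{2}\partial_i R + O(|x|^{-2\delta-3})$ in the asymptotic region, so
\begin{equation*}
-2\int_{\partial B_r}(R_{ij,j}-R_{jj,i})\,dS^i = \int_{\partial B_r}\partial_i R\,dS^i + O(r^{-\mu})
\end{equation*}
for some $\mu > 0$. On the other hand, applying the divergence theorem to the bulk integral and using Lemma~\ref{lem:nablaR} to discard the boundary term at infinity gives $\int_{M\setminus B_r}\Delta R\,dV = -\int_{\partial B_r}\partial_i R\,dS^i + O(r^{-\mu'})$, where the error again comes from converting the covariant flux and surface element to the coordinate ones. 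The two boundary contributions cancel, leaving
\begin{equation*}
\frac{d}{dt} F_r(t) = \int_{M\setminus B_r}(2|Ric|^2 - R^2)\,dV + O(r^{-\min(\mu,\mu')}).
\end{equation*}
Since $g(t)$ is asymptotically flat of order $\delta > (n-2)/2$ for every $t$, both $|Ric|^2$ and $R^2$ decay as $O(|x|^{-2\delta-4})$, faster than $|x|^{-n}$, so the bulk integral vanishes as $r\to\infty$ uniformly in $t\in[0,T]$.

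Integrating the identity from $t_1$ to $t_2$ and letting $r\to\infty$ then yields $m(g(t_2)) = m(g(t_1))$. The main obstacle is the careful bookkeeping of the Christoffel-symbol corrections arising when passing between covariant and coordinate derivatives, both in the application of the Bianchi identity at the boundary and in the divergence theorem for the bulk; each produces errors that must decay at a polynomial rate in $r$ with constants uniform in $t$, which is exactly what Theorem~\ref{thm:AFRF} provides. A secondary point is that the integrand $2|Ric|^2 - R^2$ is not pointwise nonnegative, so we really do need the tail estimate on both pieces separately rather than a sign argument.
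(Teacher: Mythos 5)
Your derivative identity for $F_r$ is essentially a repackaging of the paper's computation: the paper keeps the two boundary contributions separate, bounding $\left|\frac{d}{dt}m_r(g(t))\right|$ by $\int_{\partial B_r}|\nabla R|\,dS + Cr^{-\lambda}$ via the contracted Bianchi identity and then invoking Lemma~\ref{lem:nablaR}, whereas you cancel the Bianchi boundary term against the flux of the bulk $\Delta R$ integral. For $t$ bounded away from zero this cancellation is legitimate. But there is a genuine gap at the endpoint $t=0$, which is precisely the half of the proof the paper treats separately. Every analytic input you use degenerates there: Lemma~\ref{lem:nablaR} gives $\int_{\partial B_a}|\nabla R|\,dS \to 0$ uniformly only on $[t_0,T]$ with $t_0>0$, so your discarding of the flux at infinity (and hence the error $O(r^{-\mu'})$) is not uniform down to $t=0$; worse, at $t=0$ the metric is controlled only in $C^2_\delta$, so $\nabla R$ and $\Delta R$ are not controlled (nor even defined, if $g$ is merely $C^2$), and the interior estimates along the flow blow up like negative powers of $t$. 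Consequently, integrating your identity ``from $t_1$ to $t_2$'' yields $m(g(t_2))=m(g(t_1))$ only for $0<t_1\leqslant t_2$, i.e.\ constancy on $(0,T]$. The statement actually needed --- and the one used later in Theorem~\ref{thm:diffeo} --- is $m(g(t))=m(g(0))$, and for that the paper runs a separate continuity argument: by \eqref{eq:mgt} and the tail estimate of Lemma~\ref{lem:intRdecay}, which \emph{is} uniform in $t$ down to $t=0$, the difference $|m(g(t))-m(g(0))|$ is bounded by a tail $\epsilon(r)$ plus boundary integrals of $g(t)_{ij,j}-g(0)_{ij,j}$ over a \emph{fixed} sphere $\partial B_r$, and these vanish as $t\to 0$ by local convergence of the flow. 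Your proposal contains no substitute for this step.

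A secondary, fixable issue: differentiating $\int_{M\setminus B_r}R\,dV$ under the integral sign and applying the divergence theorem on the unbounded region require $\Delta R\in L^1(M\setminus B_r)$ together with a dominating bound locally uniform in $t$; the $C^2_\delta$ control of Theorem~\ref{thm:AFRF} does not supply this, since it controls only two derivatives of $g$ while $\Delta R$ involves four (and your conversion $R_{jj,i}=R_{,i}+O(|x|^{-2\delta-3})$ implicitly uses decay of $\nabla \mathrm{Ric}$, a third-order quantity). For $t\geqslant t_0>0$ such bounds can be extracted from interior parabolic estimates of the kind proved in Lemma~\ref{lem:nablaR}, but this needs to be said. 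The paper's route sidesteps the issue entirely by never differentiating the bulk term: it compares $m$ with the flux integral $m_r$ using Lemma~\ref{lem:intRdecay} and differentiates only $m_r$, which requires nothing beyond the surface estimate of Lemma~\ref{lem:nablaR}.
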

\begin{proof}
	Using the integration by parts formula \eqref{eq:mparts} we have
	\begin{equation}\label{eq:mgt}
		m(g(t)) =  \int_{M\setminus B_r} R(g(t))\, dV + \int_{\partial B_r}
		g(t)_{ij,j} - g(t)_{jj,i}\,dS^i + O(r^{-\lambda}),
	\end{equation}
	for some $\lambda>0$, where the constant in $O(r^{-\lambda})$ is
	independent of $t$. Let us write
	\[ m_r(g(t)) = \int_{\partial B_r} g_{ij,j} - g_{jj,i}\,dS^i.\]
	We can use the surface measure $dS^i$ with respect to the fixed
	metric $\hat{g}$ (or even the Euclidean metric), 
	since the metrics $g(t)$ are all asymptotically equal. Then
	\[ \frac{d}{dt} m_r(g(t)) = \int_{\partial B_r} -2R_{ij,j} +
	2R_{,i}\,dS^i.\]
	The derivative $R_{ij,j}$ differs from the covariant derivative
	$R_{ij;j}$ by terms of the form $\Gamma^p_{ij}R_{pj}$, which are
	of order $|x|^{-2\delta-3}$, so integrating over $\partial B_r$
	contributes $O(r^{-\lambda})$. Using this, together with the
	contracted Bianchi identity $2R_{ij;j}= R_{,i}$ we have
	\[ \left|\frac{d}{dt} m_r(g(t))\right| \leqslant \int_{\partial B_r}
	|\nabla R|\,dS + Cr^{-\lambda}.\]
	Now let $\epsilon > 0$. As long as $t\in[t_0,T]$ for some $t_0>0$ 
	we can choose $r\gg 1$ (using Lemma~\ref{lem:nablaR}) such that
	\begin{equation}\label{eq:ddtmr}
		\left|\frac{d}{dt}m_r(g(t))\right| \leqslant \epsilon,
	\end{equation}
	and also, using Lemma~\ref{lem:intRdecay} together with
	Equation~\eqref{eq:mgt}, we can ensure that $|m(g(t))-m_r(g(t))|
	< \epsilon$. These two bounds imply that for $s,t\in[t_0,T]$ we
	have $|m(g(t))-m(g(s))| < (2+T)\epsilon$. Since this is true for
	any $\epsilon > 0$, and we can also choose $t_0>0$ arbitrarily,
	this shows that $m(g(t))$ is constant for $t > 0$. 

	What remains is to show that $\lim_{t\to 0} m(g(t))=m(g(0))$.
	This follows from Equation~\eqref{eq:mgt} applied to $g(0)$ and
	$g(t)$. Indeed, using Lemma~\ref{lem:intRdecay} we have a
	function $\epsilon(r)$ which goes to zero as $r\to \infty$
	(which absorbs the $O(r^{-\lambda})$ as well), such that 
	\[\begin{aligned} |m(g(t)) - m(g(0))| < \epsilon(r) &+ \left| \int_{\partial
		B_r} g(t)_{ij,j} - g(0)_{ij,j}\,dS^i\right| \\
		&+ \left|\int_{\partial B_r} g(t)_{jj,i} - g(0)_{jj,i}\,dS^i\right|.
	\end{aligned}\]
	Letting $t\to 0$ and then $r\to \infty$, we get $\lim_{t\to 0}
	m(g(t)) = m(g(0))$. This completes the proof. 
\end{proof}

We will need the following lemma when showing that the scalar curvature
is non-negative for positive time,
when starting the flow with the singular metric. 
\begin{lem}\label{lem:Rneg}
	For any $t\in[0,T]$ we have
	\[ \int_{\{R(g(t))<0\}} |R(g(t))|\,dV_t \leqslant e^K\int_{\{
	R(\hat{g})<0\}} |R(\hat{g})|\,dV,\]
	where $-K$ is the lower bound for $R(\hat{g})$. 
\end{lem}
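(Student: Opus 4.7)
The plan is to bound the integral of a smooth approximation of $R_- := \max(-R,0)$ and apply a Gronwall argument. Pick a smooth convex family $\psi_\epsilon:\mathbf{R}\to[0,\infty)$ with $\psi_\epsilon'\in[-1,0]$, $\psi_\epsilon''\geqslant 0$, and $\psi_\epsilon(x)\to x_-$ pointwise as $\epsilon\to 0$ (e.g.\ $\psi_\epsilon(x)=\tfrac{1}{2}(\sqrt{x^2+\epsilon^2}-x)$, in the spirit of the $u$ used in the proof of Lemma~\ref{lem:intRdecay}).

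Using $\partial_t R = \Delta R + 2|Ric|^2$ together with $\partial_t\,dV_t = -R\,dV_t$, a formal computation gives
\[
\frac{d}{dt}\int_M \psi_\epsilon(R)\,dV_t
 = \int_M \psi_\epsilon'(R)\bigl(\Delta R + 2|Ric|^2\bigr)\,dV_t
   - \int_M R\,\psi_\epsilon(R)\,dV_t.
\]
The identity $\psi_\epsilon'(R)\Delta R = \Delta\psi_\epsilon(R) - \psi_\epsilon''(R)|\nabla R|^2$, the sign conditions $\psi_\epsilon'\leqslant 0$, $\psi_\epsilon''\geqslant 0$, $\psi_\epsilon\geqslant 0$, and the lower bound $-R\leqslant K$ then formally yield
\[
\frac{d}{dt}\int_M \psi_\epsilon(R)\,dV_t \leqslant K\int_M \psi_\epsilon(R)\,dV_t,
\]
so Gronwall combined with $T\leqslant 1$ produces the factor $e^K$, after which letting $\epsilon\to 0$ (with $|R|+\epsilon$ as an $L^1$ dominating function, integrable by Lemma~\ref{lem:intRdecay}) recovers the claimed inequality.

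To make this rigorous I would insert a standard compactly supported cutoff $\chi_r$ with $\chi_r\equiv 1$ on $B_r$, $\mathrm{supp}\,\chi_r\subset B_{2r}$, and $|\nabla\chi_r|\leqslant C/r$. Integrating by parts once in the Laplacian term gives
\[
\int_M \chi_r\psi_\epsilon'(R)\Delta R\,dV_t
= -\int_M \chi_r\psi_\epsilon''(R)|\nabla R|^2\,dV_t
  - \int_M \psi_\epsilon'(R)\,\nabla\chi_r\cdot\nabla R\,dV_t,
\]
where the first term is nonpositive and the second is controlled, using $|\psi_\epsilon'|\leqslant 1$ and coarea, by $C\sup_{\rho\in[r,2r]}\int_{\partial B_\rho}|\nabla R|\,dS$, which tends to $0$ uniformly on $[t_0,T]$ for any $t_0>0$ thanks to Lemma~\ref{lem:nablaR}. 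Applying Gronwall on $[t_0,T]$, letting $r\to\infty$, then $t_0\to 0$ (using continuity of $R(g(t))$ and $dV_t$ at the $C^2_\delta$ initial time), and finally $\epsilon\to 0$ gives the lemma.

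The main obstacle is this cutoff error in the integration by parts: one wants no constant worse than $K$ inside the Gronwall exponent, so the $f_{r_1,r_2}$ cutoffs of Lemma~\ref{lem:cutoff} (which only satisfy $\Delta f\leqslant Cf$) are too lossy, and one genuinely needs the refined gradient decay of Lemma~\ref{lem:nablaR} to kill the boundary error with no leftover multiplicative constant. The other delicate point is justifying the $t_0\to 0$ limit, but this follows from the regularity of the Ricci flow with $C^2_\delta$ initial data.
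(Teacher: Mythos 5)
Your overall strategy is the paper's own: the paper takes $v=\sqrt{R^2+\delta}-R$ (which is, up to normalization, exactly your $\psi_\epsilon(R)$), derives $\partial_t v\leqslant \Delta v$ from the same convexity/Kato-type computation that discards both the $2|Ric|^2$ term and the $\psi''|\nabla R|^2$ term, uses the preserved lower bound $R>-K$ to absorb the volume-form term via $-vR\leqslant Kv$, applies the Gronwall Lemma~\ref{lem:ODE} with $T\leqslant 1$ to produce the factor $e^K$, and kills the spatial error using Lemma~\ref{lem:nablaR} (the paper integrates over $B_r$ directly and bounds the boundary term by $\int_{\partial B_r}|\nabla v|\,dS\leqslant 2\int_{\partial B_r}|\nabla R|\,dS$, whereas you use an annular cutoff $\chi_r$ and the coarea formula --- a cosmetic difference). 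Your remark that the cutoffs $f_{r_1,r_2}$ of Lemma~\ref{lem:cutoff} are too lossy here, because $\Delta f\leqslant Cf$ would pollute the Gronwall exponent with a constant worse than $K$, correctly identifies why the paper switches to the refined gradient decay of Lemma~\ref{lem:nablaR} in this proof.

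There is, however, one step that fails as written: the declared order of limits ``$r\to\infty$, then $t_0\to 0$, and finally $\epsilon\to 0$''. For fixed $\epsilon>0$ the function $\psi_\epsilon(R)$ does \emph{not} vanish at spatial infinity: since $R\to 0$ there, $\psi_\epsilon(R)\to\psi_\epsilon(0)=\epsilon/2>0$, so $\int_M \psi_\epsilon(R(t))\,dV_t=+\infty$ on the asymptotically flat (infinite-volume) manifold. Letting $r\to\infty$ first therefore reduces your Gronwall inequality to $\infty\leqslant\infty$, and for the same reason $|R|+\epsilon$ is not an $L^1$ dominating function on all of $M$, so the final $\epsilon\to 0$ step cannot be justified as stated. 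The fix is precisely the paper's ordering: at fixed $r$, first let $\epsilon\to 0$ (dominated convergence on the compact region $B_{2r}$, with dominating function $|R|+1$), obtaining the inequality for $\max\{0,-R\}$ between times $t_0$ and $t$ up to the error $e^K\eta(r)$; then let $r\to\infty$ using the integrability $R(g(t))\in L^1$ from Lemma~\ref{lem:intRdecay}; then let $t_0\to 0$ as you describe. With this rearrangement your argument is complete and coincides with the paper's proof.
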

\begin{proof}
	Let us simply write $R$ for $R(g(t))$. 
	For $\delta > 0$ define the function 
	\[ v = \sqrt{R^2+\delta} - R,\]
	which is a smoothing of $2\max\{0,-R\}$. From Equation
	(\ref{eq:dudt}) we have that
	\[ \frac{\partial v}{\partial t} \leqslant \Delta v.\]
	Now compute
	\[ \begin{aligned}
		\frac{d}{dt} \int_{B_r} v\,dV &\leqslant \int_{B_r}
		\Delta v - vR\,dV \\
		&\leqslant \int_{\partial B_r} |\nabla v|\,dS +
		K\int_{B_r} v\,dV,
	\end{aligned}\]
	where we used that the lower bound $R(\hat{g}) > -K$ is
	preserved along the flow. 
	At the same time $|\nabla v|\leqslant 2|\nabla R|$, so we have
	\[ \frac{d}{dt} \int_{B_r} v\,dV \leqslant 2\int_{\partial B_r}
	|\nabla R|\,dS + K\int_{B_r} v\,dV. \]

	We know from Lemma~\ref{lem:nablaR}
	that for each $t>0$ the integral of $|\nabla R|$ on
	$\partial B_r$ goes to zero as $r\to\infty$, and 
	that this convergence is uniform as long
	as $t\in[\tau,T]$ for some $\tau > 0$. So for any $\eta,\tau > 0$ we can
	choose $r(\eta)$ such that for $r > r(\eta)$ and $t\in[\tau,T]$
	we have
	\[ 2\int_{\partial B_r} |\nabla R|\,dS < \eta. \]
	It follows then using Lemma~\ref{lem:ODE}
	that for $t\in[\tau,T]$ and $r > r(\eta)$, 
	\[ \int_{B_r} v(t)\,dV_t \leqslant e^K\int_{B_r}
	v(\tau)\,dV_\tau + e^K\eta.\]
	We can now let $\delta\to 0$
	in the definition of $v$, so that $v\to 2\max\{0,-R\}$. It
	follows that
	\[ \int_{B_r\cap \{R(t) < 0\}} |R(t)|\,dV_t \leqslant e^K
	\int_{B_r\cap \{R(\tau) < 0\}} |R(\tau)|\,dV_\tau + e^K\eta. \]
	Now we let $r\to \infty$ and $\eta\to 0$, using the fact
	that $|R|$ is integrable on $M$ by Lemma~\ref{lem:intRdecay}. We get
	\[ \int_{\{R(t) < 0\}} |R(t)|\,dV_t \leqslant e^K\int_{\{R(\tau)
	< 0\}} |R(\tau)|\,dV_\tau.\]
	Since we can do this for any $\tau > 0$, we have
	\[ \int_{\{R(t) < 0\}} |R(t)|\,dV_t \leqslant e^K
	\int_{\{R(\hat{g}) < 0\}}
	|R(\hat{g})|\,dV, \]
	which is what we wanted to prove. 
\end{proof}

\section{The mass of a limit metric}\label{sec:limit}
In this section we will study what we can say about the mass of a metric
$g$, if a sequence of
asymptotically flat metrics $g_\epsilon$ converge to $g$ locally
uniformly. Under convergence in a suitable topology it is known that
$m(g)=\lim m(g_\epsilon)$ (see for example Lee-Parker~\cite[Lemma
9.4]{LP87}). Under weaker assumptions 
we will show that $m(g)\leqslant \liminf m(g_\epsilon)$. 

\begin{thm}\label{thm:mass-limit}
	Suppose that $g_\epsilon\to g$ locally uniformly in $C^2$ and
	that for some $\kappa > 0$ we have the asymptotic decay conditions
	\begin{equation}\label{eq:af} 
		\Vert g_{\epsilon,ij} -
		\delta_{ij}\Vert_{C^{1,\alpha}_\delta(M\setminus K)}
		< \kappa
	\end{equation}
	for some $\delta > (n-2)/2$. In addition we require that
	$R(g_\epsilon)\in L^1$
	for all $\epsilon > 0$, and the scalar curvature of $g_\epsilon$
	is almost non-negative in the following sense:
	\begin{equation}\label{eq:Rnonneg}
		\int_{\{R(g_\epsilon) < 0\}} |R(g_\epsilon)|\,dV < \epsilon.
	\end{equation}
	Then $R(g)\geqslant 0$, $m(g)$ is defined and
	\[ m(g) \leqslant \liminf_{\epsilon\to 0} m(g_\epsilon),\]
	assuming that the limit is finite.
\end{thm}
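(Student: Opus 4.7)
The plan is to combine the integration by parts formula (\ref{eq:mparts}) with Fatou's lemma, exploiting the fact that the negative parts of $R(g_\epsilon)$ are small in $L^1$ by (\ref{eq:Rnonneg}). First, since $g_\epsilon\to g$ locally uniformly in $C^2$, we have locally uniform convergence $R(g_\epsilon)\to R(g)$, and hence also $R(g_\epsilon)_-\to R(g)_-$ uniformly on compact sets. But $\Vert R(g_\epsilon)_-\Vert_{L^1}<\epsilon\to 0$ forces the uniform limit $R(g)_-$ to vanish on every compact set, giving $R(g)\geqslant 0$. The uniform decay (\ref{eq:af}) is inherited in the limit, so $g$ is asymptotically flat in $C^{1,\alpha}_\delta$ and $C^2$ on compacta.

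Next, I would apply (\ref{eq:mparts}) to each $g_\epsilon$:
\[ \int_{M\setminus B_r} R(g_\epsilon)\,dV_{g_\epsilon} = m(g_\epsilon) - m_r(g_\epsilon) + O(r^{-\lambda}), \]
where $m_r(g)=\int_{\partial B_r}(g_{ij,j}-g_{jj,i})\,dS^i$ and the error constant depends only on $\kappa$, hence is uniform in $\epsilon$. Since $R(g_\epsilon)_+\leqslant R(g_\epsilon) + R(g_\epsilon)_-$, the hypothesis (\ref{eq:Rnonneg}) yields
\[ \int_{M\setminus B_r} R(g_\epsilon)_+\,dV_{g_\epsilon} \leqslant m(g_\epsilon) - m_r(g_\epsilon) + O(r^{-\lambda}) + \epsilon. \]
The non-negative integrands $R(g_\epsilon)_+\sqrt{\det g_\epsilon}$ converge pointwise to $R(g)\sqrt{\det g}$, while $m_r(g_\epsilon)\to m_r(g)$ for each fixed $r$ by $C^2$ convergence on $\partial B_r$. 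Fatou's lemma then gives
\[ \int_{M\setminus B_r} R(g)\,dV_g \leqslant \liminf_{\epsilon\to 0} m(g_\epsilon) - m_r(g) + O(r^{-\lambda}). \]
Since the right-hand side is finite by assumption, $R(g)\in L^1(M\setminus B_r)$, and the local bound on $B_r$ upgrades this to $R(g)\in L^1(M)$. In particular $m(g)$ is defined and (\ref{eq:mparts}) applies to $g$.

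Finally, using (\ref{eq:mparts}) for $g$, the Fatou inequality rearranges to
\[ m(g) = \int_{M\setminus B_r} R(g)\,dV_g + m_r(g) + O(r^{-\lambda}) \leqslant \liminf_{\epsilon\to 0} m(g_\epsilon) + O(r^{-\lambda}), \]
and letting $r\to\infty$ gives the desired inequality. The main technical issue is using Fatou correctly: one cannot apply it directly to $R(g_\epsilon)$ since it can change sign, so one must pass to the non-negative part $R(g_\epsilon)_+$ and absorb the resulting discrepancy using (\ref{eq:Rnonneg}). The uniformity of the error $O(r^{-\lambda})$ in $\epsilon$, needed to take the limit inside Fatou, is precisely what the uniform asymptotic bound (\ref{eq:af}) provides.
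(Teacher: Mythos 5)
Your proof is correct and follows essentially the same route as the paper: both rest on the integration by parts formula \eqref{eq:mparts} with an error term $O(r^{-\lambda})$ uniform in $\epsilon$ (via \eqref{eq:af}), Fatou's lemma applied after absorbing the negative part of $R(g_\epsilon)$ using \eqref{eq:Rnonneg}, convergence of the boundary integrals $m_r(g_\epsilon)\to m_r(g)$ at fixed $r$, and finally $r\to\infty$. The only cosmetic difference is that the paper runs Fatou twice --- once in a separate lemma to get $R(g)\in L^1$ from $\int_{M\setminus B_1}|R(g_\epsilon)|\,dV\leqslant m(g_\epsilon)+C$, and once in the main argument --- whereas you extract integrability and the mass inequality from a single application to $R(g_\epsilon)_+$, which is a harmless streamlining.
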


\begin{lem}\label{lem:Rtint}
	The scalar curvature of $g$ is non-negative and integrable. 
\end{lem}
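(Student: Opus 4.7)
The plan is to deduce both statements directly from the local $C^2$ convergence $g_\epsilon\to g$, the uniform weighted bounds \eqref{eq:af}, and the almost non-negativity \eqref{eq:Rnonneg}, together with the integration-by-parts formula \eqref{eq:mparts} for the mass.

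First I would show $R(g)\geqslant 0$ by contradiction. Locally uniform $C^2$ convergence of $g_\epsilon$ to $g$ forces $R(g_\epsilon)\to R(g)$ uniformly on each compact set, so $R(g)$ is continuous. If $R(g)(p)<-a<0$ at some point $p$, then on a bounded open neighborhood $U$ of $p$ we would have $R(g)<-a/2$, and hence $R(g_\epsilon)<-a/4$ on $U$ once $\epsilon$ is small enough. But then $\int_{\{R(g_\epsilon)<0\}}|R(g_\epsilon)|\,dV\geqslant (a/4)\vol(U)$, contradicting \eqref{eq:Rnonneg} as $\epsilon\to 0$.

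Second, I would obtain a uniform $L^1$ bound on $R(g_\epsilon)^+$ along a subsequence realizing $\liminf m(g_\epsilon)$. Fix $r_0$ large enough that $K\subset B_{r_0}$. Applying \eqref{eq:mparts} to $g_\epsilon$ yields
\[
\int_{M\setminus B_{r_0}}R(g_\epsilon)\,dV
= m(g_\epsilon) - \int_{\partial B_{r_0}}(g_{\epsilon,ij,j}-g_{\epsilon,jj,i})\,dS^i
+ O(r_0^{-\lambda}),
\]
where the boundary integral and the error term are both bounded uniformly in $\epsilon$ because the constants in \eqref{eq:mparts} depend only on the $C^{1,\alpha}_\delta$ norm of $g_\epsilon-h$, which is controlled by \eqref{eq:af}. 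Since $\liminf m(g_\epsilon)<\infty$, the right-hand side is uniformly bounded along some subsequence. On the compact region $B_{r_0}$ the integrand $R(g_\epsilon)$ is uniformly bounded by local $C^2$ convergence, so $\int_M R(g_\epsilon)\,dV$ is uniformly bounded along the subsequence. Combining with \eqref{eq:Rnonneg},
\[
\int_M R(g_\epsilon)^+\,dV \;=\; \int_M R(g_\epsilon)\,dV + \int_{\{R(g_\epsilon)<0\}}|R(g_\epsilon)|\,dV
\]
is uniformly bounded as well.

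Finally, since $R(g_\epsilon)^+\to R(g)^+=R(g)$ pointwise on $M$ by the first step and local $C^2$ convergence, Fatou's lemma gives
\[
\int_M R(g)\,dV \;\leqslant\; \liminf_{\epsilon\to 0}\int_M R(g_\epsilon)^+\,dV \;<\;\infty,
\]
so $R(g)\in L^1$. The only real obstacle is ensuring that the constants produced by \eqref{eq:mparts} are genuinely independent of $\epsilon$; this follows from the uniform decay hypothesis \eqref{eq:af}, which is why that assumption is made explicit rather than just requiring $g_\epsilon$ to be individually asymptotically flat.
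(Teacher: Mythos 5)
Your proposal is correct and takes essentially the same route as the paper: both arguments obtain a uniform-in-$\epsilon$ bound on $\int |R(g_\epsilon)|\,dV$ from $m(g_\epsilon)$ via the integration-by-parts identity (the paper derives the finite-radius version \eqref{eq:intparts} on $B_r\setminus B_1$ and lets $r\to\infty$, while you invoke \eqref{eq:mparts} at a fixed $r_0$, which is legitimate since $R(g_\epsilon)\in L^1$ is hypothesized, and handle the compact piece by locally uniform convergence), combine this with the negative-part bound \eqref{eq:Rnonneg}, and conclude by Fatou's lemma using pointwise convergence of $R(g_\epsilon)$ and of the volume forms. Your contradiction argument for $R(g)\geqslant 0$ is simply an expanded version of the paper's remark that non-negativity follows by taking the limit in \eqref{eq:Rnonneg}.
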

\begin{proof}
	The fact that $R(g)\geqslant 0$ follows easily by taking the
	limit of Inequality~\eqref{eq:Rnonneg}. 

	Integrating Equation \eqref{eq:R} by parts, and using
	\eqref{eq:expand},  we get that for some $\lambda > 0$
	(depending on $\delta > (n-2)/2$)
	\begin{equation}\label{eq:intparts}
		\begin{aligned}
		\int_{B_r\setminus B_1} R(g)\,dV = &\int_{\partial B_r}
		g_{ij,j} - g_{jj,i}\,dS^i + O(r^{-\lambda}) \\
		&- \int_{\partial B_1} 
		|g|^{1/2}g^{ij}\left(\Gamma_j-\frac{1}{2}\partial_j(\log
		|g|)\right)\,dS^i \\
		&- \int_{B_r\setminus B_1} \frac{1}{2}g^{ij}\Gamma_i
		\partial_j(\log |g|) + g^{ij}g^{kl}g^{pq} \Gamma_{ikp}
		\Gamma_{jql}\, dV.
		\end{aligned}
	\end{equation}
	The constant in $O(r^{-\lambda})$ only depends on $\kappa$ in
	the decay bounds \eqref{eq:af}. Using these decay bounds again,
	and the bound on the
	negative part of $R(g_\epsilon)$ in
	Equation~\eqref{eq:intparts}
	we get
	\[ \int_{B_r\setminus B_1} |R(g_\epsilon)|\,dV \leqslant
	\int_{\partial B_r} (g_\epsilon)_{ij,j} - (g_\epsilon)_{jj,i}
	\,dS^i + C,\]
	where $C$ is independent of $\epsilon$ and $r$. 
	Taking $r\to\infty$ we have
	\[ \int_{M\setminus B_1} |R(g_\epsilon)|\,dV \leqslant
	m(g_\epsilon) + C.\]
	Since $|R(g_\epsilon(t))|\to |R(g(t))|$ pointwise (and the
	volume forms also converge pointwise), we find that $R(g(t))$ is
	integrable, using Fatou's Lemma, as long as $\liminf
	m(g_\epsilon)$ is finite.
\end{proof}

\begin{proof}[Proof of Theorem~\ref{thm:mass-limit}]
	By Lemma \ref{lem:Rtint} and the decay conditions \eqref{eq:af}
	we know that the mass of $g(t)$ is defined. 
	From the integration by parts formula \eqref{eq:mparts} we have
	\[ m(g) =  \int_{M\setminus B_r} R(g)\, dV + \int_{\partial B_r}
		g_{ij,j} - g_{jj,i}\,dS^i + O(r^{-\lambda}), \]
	Using the same formula again with
	$g_\epsilon$ instead of $g$, we get
	\begin{equation}\label{eq:mg}
	\begin{aligned}
		m(g) =& m(g_\epsilon) + O(r^{-\lambda}) \\
			&+ \int_{M\setminus B_r} R(g)\,dV -
			\int_{M\setminus B_r} R(g_\epsilon)\,dV_\epsilon
			\\
			&+ \int_{\partial B_r} g_{ij,j} - g_{jj,i}\,dS^i
			- \int_{\partial B_r} (g_\epsilon)_{ij,j} -
			(g_\epsilon)_{jj,i}\,dS_\epsilon^i. 
	\end{aligned}\end{equation}

	Note that for a fixed $r > 0$, the metrics $g_\epsilon$
	converge uniformly in $C^2$ to $g$ on $\partial B_r$ 
	as $\epsilon\to 0$. At the same time, using \eqref{eq:Rnonneg}
	together with Fatou's Lemma we have
	\[ \int_{M\setminus B_r} R(g)\,dV \leqslant \liminf_{\epsilon\to
	0} \int_{M\setminus B_r} R(g_\epsilon)\,dV.\]
	Therefore taking the limit as $\epsilon\to 0$
	in \eqref{eq:mg} for a fixed $r$, we
	get
	\[ m(g)\leqslant \liminf_{\epsilon\to 0} m(g_\epsilon) +
	O(r^{-\lambda}). \]
	Now taking $r\to\infty$ we get the required result. 
\end{proof}

\section{Proof of the main theorem}\label{sec:main}
We now let $\hat{g}$ be a metric with corners across a hypersurface,
following Miao~\cite{Miao02}. For the definition see the Introduction.
We assume that $\hat{g}$ is asymptotically flat in $C^2_\delta$ for some
$\delta > (n-2)/2$, but note that $\hat{g}$ only has to be $C^2$ outside
a compact set. 
The important result for us is that the smoothing
procedure in~\cite[Proposition 3.1]{Miao02}
gives us metrics $\hat{g}_\epsilon$ which satisfy
the following conditions, with a fixed $K> 0$: 
\begin{equation}\begin{gathered}\label{eq:approx}
	\hat{g}_\epsilon = \hat{g}\,\text{ outside }B, \\
	(1-\epsilon)\hat{g}\leqslant
	\hat{g}_\epsilon\leqslant(1+\epsilon)\hat{g},\\
	R(\hat{g}_\epsilon) > -K,\\
	\int_{\{R(\hat{g}_\epsilon)<0\}} |R(\hat{g}_\epsilon)|\,dV_\epsilon <
	\epsilon.
\end{gathered}\end{equation}
The last
condition holds since the metrics $\hat{g}_\epsilon$ can be constructed such
that the set $\{R(\hat{g}_\epsilon)<0\}$ has measure less than 
$K^{-1}\epsilon$ and at the same time $R(\hat{g}_\epsilon)>-K$ everywhere
for some uniform constant $K$. 

Let $g(t)$ be the solution of the Hamilton-DeTurk flow with
initial metric $\hat{g}$, and background metric $h=\hat{g}_\epsilon$ for
sufficiently small $\epsilon$, constructed by Simon~\cite{Sim02}. In
fact we can modify $h$ to be equal to the Euclidean metric outside a
compact set, and it will still be $1+\epsilon(n)$-fair to $\hat{g}$. Recall
that the $h$-flow with initial metric $\hat{g}$ is obtained by taking
the limit of the $h$-flows with initial metrics $\hat{g}_\epsilon$. Let
us write $g_\epsilon(t)$ for the $h$-flow with initial metric
$\hat{g}_\epsilon$. 

We first want to show
that the metrics $g(t)$ are asymptotically flat for $t >
0$. We are not able to show that $g(t)$ is asymptotically flat in
$C^2_\delta$, but only in $C^{1,\alpha}_{\delta'}$ with $\alpha > 0$
small, and $\delta' = \delta-\alpha$. 
This is enough for the positive
mass theorem to hold (see e.g. Lee-Parker~\cite{LP87}). 

\begin{lem}\label{lem:af}
	There is a $T > 0$ independent of $\epsilon$, and for 
	any $t_0\in(0,T]$ 
	there is a constant $\kappa > 0$ such that for $t\in [t_0,T]$
	and $\epsilon > 0$
	we have
	\begin{equation}\label{eq:decay2}
		\Vert g_\epsilon(t) - h\Vert_{C^{1,
		\alpha}_{\delta-\alpha}} \leqslant\kappa.
	\end{equation}
	By taking $\epsilon\to 0$ the same estimate holds for $g(t)$. In
	particular if $\delta > (n-2)/2$ and $\alpha$ is small enough,
	then $\delta'=\delta-\alpha$ satisfies $\delta' 
	> (n-2)/2$, and $g(t)$ is
	asymptotically flat in $C^{1,\alpha}_{\delta'}$. 
\end{lem}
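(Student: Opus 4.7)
The plan is to prove \eqref{eq:decay2} first for each smooth approximant $g_\epsilon(t)$ with constants independent of $\epsilon$, then pass to the limit using Arzelà--Ascoli together with the local $C^2$-convergence $g_\epsilon(t) \to g(t)$ from Simon's construction. The key input from \eqref{eq:approx} is that $\hat g_\epsilon \equiv \hat g$ outside $B$, so the initial data satisfies $\Vert\hat g_\epsilon - h\Vert_{C^2_\delta(M\setminus B)} \leqslant \kappa$ uniformly in $\epsilon$, with $\kappa$ depending only on $\hat g$. Set $u := g_\epsilon(t) - h$; enlarging $B$ so that $h$ is Euclidean outside it, the Hamilton--DeTurck $h$-flow becomes the quasilinear strictly parabolic equation
\[
\partial_t u_{ij} = g_\epsilon^{pq}\,\partial_p\partial_q u_{ij} + Q_{ij}(g_\epsilon^{-1},\partial u,\partial u),
\]
with $Q$ quadratic in $\partial u$ and coefficients uniformly controlled by Simon~\eqref{eq:C1}--\eqref{eq:C2}.

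First, I would establish a uniform weighted $C^0$-bound $|u(x,t)| \leqslant C\rho(x)^{-(\delta-\alpha)}$ via the maximum principle in the style of Dai--Ma~\cite{DM07}. Compute the evolution of $v := |u|^2$, obtaining a differential inequality
\[
\partial_t v - g_\epsilon^{pq}\partial_p\partial_q v \leqslant -2|\nabla u|^2 + C_0|u||\nabla u|^2
\]
plus linear-in-$v$ corrections. Compare with a time-dependent barrier $\Phi(x,t) := A\,e^{C_1 t}\rho(x)^{-2(\delta-\alpha)}$ on the exterior $\{\rho\geqslant R_0\}$: the exponential factor in $t$ absorbs both the lower-order linear terms and the contribution of $\Delta\Phi$ (whose sign is unfavorable because $2\delta > n-2$), while the $\alpha$-backoff gives the slack needed at the boundary $\rho = R_0$ where one matches $\Phi$ against Simon's uniform $C^0$-bound \eqref{eq:C1}. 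The quadratic term $C_0|u||\nabla u|^2$ is absorbed into $-2|\nabla u|^2$ by choosing $R_0$ so large that $C_0|u| < 1$ there; this smallness of $|u|$ in the far region is obtained via a preliminary global $L^\infty$-maximum principle starting from the uniform initial decay outside $B$.

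Second, I would upgrade to $C^{1,\alpha}$ via parabolic rescaling. For each $p$ with $R := |p|/2 > R_0$, define $\tilde g(y,s) := g_\epsilon(p + Ry,\, t_0 + R^2 s)$; then $\tilde u := \tilde g - h$ solves the same $h$-flow equation on the unit parabolic cylinder $B_1(0) \times [-\tau, 0]$ (the flow is scale-invariant for Euclidean $h$) and satisfies $|\tilde u|\leqslant CR^{-(\delta-\alpha)}$ by the previous step. Interior parabolic Schauder estimates~\cite[Ch.~4]{Lieb96} applied to the equation for $\tilde u$—with coefficients $C^\alpha$-controlled by Simon's uniform $C^1$ and $C^2$ bounds on $\tilde g$—yield $\Vert\tilde u\Vert_{C^{1,\alpha}(B_{1/2}(0)\times\{0\})} \leqslant CR^{-(\delta-\alpha)}$. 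Unscaling and tracking powers of $R$ gives $|\nabla u(p)|\leqslant CR^{-(\delta-\alpha)-1}$ and $[\nabla u]_{C^\alpha(B_{R/4}(p))}\leqslant CR^{-(\delta-\alpha)-1-\alpha}$, which is exactly the weighted seminorms of $C^{1,\alpha}_{\delta-\alpha}$. Taking $\epsilon\to 0$ then transfers the bound to $g(t)$ by lower semicontinuity of the weighted norms under local $C^2$ convergence.

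The main obstacle is the first stage, specifically the quadratic gradient term $C_0|u||\nabla u|^2$ coming from the nonlinearity of the $h$-flow: Simon's estimate \eqref{eq:C1} gives $|\nabla u|$ only bounded and not decaying, so this term has to be absorbed through smallness of $|u|$ itself in the far region—a property that is \emph{not} assumed at positive times but must be derived via a preliminary $L^\infty$-maximum principle step. Setting up this bootstrap, and simultaneously handling the wrong-sign Laplacian of the barrier $\rho^{-2(\delta-\alpha)}$ through the time-dependent prefactor $e^{C_1 t}$, while keeping every constant independent of $\epsilon$, is the crux of the argument; the mild $\alpha$-loss in the decay rate is the standard price one pays for this comparison scheme.
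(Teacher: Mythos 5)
Your first stage is broadly sound and runs parallel to the paper's own argument: the paper also writes $\eta=g_\epsilon(t)-h$, derives $\partial_t|\eta|^2\leqslant \Delta|\eta|^2+C_1|\eta|^2+C_2|\tilde R|^2-|\nabla\eta|^2$ from Simon's evolution equation, and applies the Ecker--Huisken maximum principle to the weighted quantity $\rho^{2\delta}|\eta|^2$. Two remarks, though. The quadratic gradient term is absorbed more simply than in your bootstrap: since $\hat g$ is Lipschitz, \eqref{eq:C1} gives a \emph{uniform} bound $|\nabla\eta|\leqslant c_0$ independent of $\epsilon$, so $|\eta\star\nabla\eta\star\nabla\eta|\leqslant C_1|\eta|^2+\tfrac12|\nabla\eta|^2$ directly, with no need for a preliminary far-field smallness step for $|u|$. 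And no $\alpha$ needs to be sacrificed at this level: one gets the full decay $|\eta|\leqslant C\rho^{-\delta}$, and likewise $|\nabla\eta|\leqslant C\rho^{-\delta-1}$ by the same maximum-principle propagation, since both weighted bounds hold at $t=0$ uniformly in $\epsilon$ because $\hat g_\epsilon=\hat g$ outside $B$ by \eqref{eq:approx}.

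The genuine gap is your second stage. The rescaled metric $\tilde g(y,s)=g_\epsilon(p+Ry,\,t_0+R^2s)$ is defined only for $t_0+R^2s\in[0,T]$ with $T\leqslant 1$ fixed, i.e.\ on a rescaled time interval of length at most $T/R^2$, which shrinks to zero as $R=|p|/2\to\infty$; there is no unit parabolic cylinder $B_1(0)\times[-\tau,0]$ on which to apply interior Schauder estimates. On the cylinders that actually exist (spatial radius $O(R)$ but temporal length only $O(t_0)$, hence parabolic radius $\sqrt{t_0}$), Schauder yields $|\nabla u(p,t_0)|\leqslant C(t_0)\rho^{-(\delta-\alpha)}$ with \emph{no} gain of a factor $R^{-1}$ per derivative, which is too weak for the weight $\delta-\alpha+1$ that \eqref{eq:decay2} demands on $\nabla u$. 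Moreover this is not a fixable technicality of your setup: no interior smoothing argument can produce the extra spatial decay, as the remark following the appendix proof shows with the heat equation on $\mathbf{R}$ and initial data $f(x,0)=\sin(x)/(1+x^2)$, for which $|\partial_x^k f(x,t_0)|=O(|x|^{-2})$ for every $k$ and $t_0>0$ --- your ``unscale and track powers of $R$'' step would prove $O(|x|^{-3})$ for $\partial_x f$, which is false. The correct mechanism is the one above: the weighted $C^1$ decay must be \emph{inherited} from the initial data and propagated by the maximum principle; and since no uniform initial $C^2$ bound is available across $\Sigma$, the paper controls second derivatives by a Shi-type time-weighted quantity, $F=C_6|\nabla\eta|^2+t(|\nabla\eta|^2+1)|\nabla^2\eta|^2-Ct\rho^{-2\delta-2}$, which yields only $|\nabla^2\eta|\leqslant C\rho^{-\delta-1}$ rather than the naively expected $\rho^{-\delta-2}$. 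That non-improvable rate, fed through interpolation into the H\"older seminorm of $\nabla\eta$, is the true source of the $\alpha$-loss in \eqref{eq:decay2} --- not the barrier comparison of your first stage.
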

This is proved using standard techniques, using the maximum principle.
We give the proof in the Appendix. 

Next we check that the other conditions of Theorem~\ref{thm:mass-limit}
are satisfied. 

\begin{lem}\label{thm:positive}
	For a fixed $t > 0$, the metrics $g_\epsilon(t)$ satisfy the
	hypotheses of Theorem~\ref{thm:mass-limit}. 
\end{lem}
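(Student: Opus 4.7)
The plan is to verify in turn each of the four hypotheses of Theorem~\ref{thm:mass-limit} for the family $\{g_\epsilon(t)\}_\epsilon$ at fixed $t>0$. Throughout, I use that each $h$-flow $g_\epsilon(t)$ is isometric, via the time-dependent diffeomorphism underlying the Hamilton--DeTurck trick, to a genuine Ricci flow starting at $\hat g_\epsilon$, so that the scalar curvature estimates of Section~\ref{sec:mass} apply directly to $g_\epsilon(t)$: integrability and sign properties of $R$ are diffeomorphism invariant.

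The locally uniform $C^2$ convergence $g_\epsilon(t)\to g(t)$ is built into Simon's construction as recalled in Section~\ref{sec:prelim}. The asymptotic decay condition \eqref{eq:af} of Theorem~\ref{thm:mass-limit} is essentially Lemma~\ref{lem:af}: for any fixed $t_0>0$ that lemma gives, uniformly in $\epsilon$ and for $t\in[t_0,T]$, a bound on $\Vert g_\epsilon(t)-h\Vert_{C^{1,\alpha}_{\delta-\alpha}}$. Since $h$ is taken to be the Euclidean metric outside a compact set, and since by shrinking $\alpha$ we can ensure $\delta'=\delta-\alpha>(n-2)/2$, this translates into the required asymptotic flatness of $g_\epsilon(t)$ relative to $\delta_{ij}$.

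For the integrability $R(g_\epsilon(t))\in L^1$ I apply Lemma~\ref{lem:intRdecay} to the flow starting at $\hat g_\epsilon$, and for the control \eqref{eq:Rnonneg} of the negative part I apply Lemma~\ref{lem:Rneg}. The crucial point is that Miao's smoothings $\hat g_\epsilon$ satisfy \eqref{eq:approx} with $K$ independent of $\epsilon$, and agree with $\hat g$ outside the compact set $B$; consequently the parameters $\kappa, K, \delta$ entering Section~\ref{sec:mass} may all be chosen uniformly in $\epsilon$, and the quantitative constants obtained there are themselves $\epsilon$-independent. Lemma~\ref{lem:Rneg} then yields
\[
\int_{\{R(g_\epsilon(t))<0\}} |R(g_\epsilon(t))|\,dV_t \leqslant e^K\int_{\{R(\hat g_\epsilon)<0\}} |R(\hat g_\epsilon)|\,dV < e^K\epsilon,
\]
and relabeling the sequence by the fixed factor $e^K$ turns this into precisely \eqref{eq:Rnonneg}.

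The main point to watch is uniformity: one must verify that the hypotheses of Section~\ref{sec:mass} hold for each $\hat g_\epsilon$ with constants independent of $\epsilon$. This is immediate from \eqref{eq:approx} together with the fact that $\hat g_\epsilon=\hat g$ outside $B$, which transfers the asymptotic geometry and the integrability of $R(\hat g)$ away from $\Sigma$ uniformly to all $\hat g_\epsilon$. Once this uniformity is in hand, the three quantitative conditions of Theorem~\ref{thm:mass-limit} follow as above, and the $C^2$ convergence is free from Simon's construction.
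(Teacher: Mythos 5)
Your proof is correct and follows essentially the same route as the paper: the paper's own (very brief) proof likewise applies Lemma~\ref{lem:intRdecay} for integrability and Lemma~\ref{lem:Rneg} for the bound $e^K\epsilon$ on the negative part, relying on the uniformity of $K$ from \eqref{eq:approx} and on Lemma~\ref{lem:af} and Simon's construction for the decay and convergence hypotheses. The points you spell out that the paper leaves implicit --- the diffeomorphism invariance transferring the Ricci-flow estimates of Section~\ref{sec:mass} to the $h$-flow metrics $g_\epsilon(t)$, and the harmless relabeling absorbing the factor $e^K$ --- are accurate and welcome, not deviations.
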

\begin{proof}
	Each $\hat{g}_\epsilon$ is asymptotically flat, so
	we can apply the results of Section~\ref{sec:mass} to the Ricci
	flow starting at $\hat{g}_\epsilon$. Lemma~\ref{lem:intRdecay}
	implies that $g_\epsilon(t)$ has integrable scalar curvature.  
	In addition we can apply
	Lemma~\ref{lem:Rneg}, and we find that
	\[ \int_{\{ R(g_\epsilon(t))<0\}} |R(g_\epsilon(t))|\,dV_t
	\leqslant e^K \int_{ \{R(\hat{g}_\epsilon)<0\}}
	|R(\hat{g}_\epsilon)|\,dV_\epsilon < e^K\epsilon.\]
\end{proof}

Finally, we give the proofs of Theorems \ref{thm:Miao} and
\ref{thm:zeromass}. 

\begin{thm}\label{thm:diffeo}
	The $C^0$ metric $\hat{g}$ has $m(\hat{g})\geqslant 0$. If
	$m(\hat{g})=0$, then $\hat{g}$ is the Euclidean metric up to a
	$C^{1,\alpha}$ change of coordinates. 
\end{thm}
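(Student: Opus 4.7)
The plan is to chain together the results built up so far. For each fixed $t>0$ we have a smooth Ricci-flow metric $g(t)$ whose mass is bounded above by $m(\hat g)$ (via the smoothings $g_\epsilon(t)$ and Theorem~\ref{thm:mass-limit}) and below by $0$ (via the classical positive mass theorem applied to $g(t)$ itself); this sandwich delivers both non-negativity and, in the equality case, flatness of each $g(t)$.

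Fixing $t>0$, I would first apply Theorem~\ref{thm:mass-limit} to the family $g_\epsilon(t)\to g(t)$ as $\epsilon\to 0$: the hypotheses hold by Lemma~\ref{thm:positive}, the asymptotic bounds of Lemma~\ref{lem:af}, and the local $C^2$ convergence coming from Simon's construction. This yields $R(g(t))\geqslant 0$ and the estimate
\[ m(g(t)) \leqslant \liminf_{\epsilon\to 0} m(g_\epsilon(t)). \]
Each $\hat g_\epsilon$ is smooth, asymptotically flat in $C^2_\delta$, and coincides with $\hat g$ outside the compact set $B$ by (\ref{eq:approx}), so $m(\hat g_\epsilon)=m(\hat g)$; Corollary~\ref{cor:mass-constant} then gives $m(g_\epsilon(t))=m(\hat g_\epsilon)$ (after noting that the DeTurk diffeomorphism, generated by a vector field $W$ which decays at infinity because $h$ is flat there, is asymptotic to the identity and therefore preserves the ADM mass). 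Consequently $m(g(t))\leqslant m(\hat g)$. On the other hand, $g(t)$ is smooth, asymptotically flat in $C^{1,\alpha}_{\delta'}$ with $\delta'>(n-2)/2$ by Lemma~\ref{lem:af}, and has non-negative integrable scalar curvature, so the smooth positive mass theorem gives $m(g(t))\geqslant 0$, yielding $m(\hat g)\geqslant 0$.

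For the rigidity statement, suppose $m(\hat g)=0$. The inequalities above then force $m(g(t))=0$ for every $t>0$, so the rigidity case of the smooth positive mass theorem provides, for each such $t$, a diffeomorphism $\phi_t\colon M\to \mathbf{R}^n$ with $g(t)=\phi_t^* g_{Eucl}$. I would then normalize $\phi_t$ using the asymptotic coordinates on $M$ (for instance by fixing the value of $\phi_t$ at a base point and requiring its limiting rotation at infinity to agree with the identity), use the uniform $C^{1,\alpha}_{\delta'}$ bound on $g(t)$ together with the two-sided bound $(1+2\epsilon(n))^{-1}h\leqslant g(t)\leqslant (1+2\epsilon(n))h$ to obtain uniform $C^{1,\alpha}$ bounds on $\phi_t$ on compact subsets, extract a subsequential $C^1$ limit $\phi$ by Arzel\`a-Ascoli, and finally send $t\to 0$ in the identity $g(t)=\phi_t^*g_{Eucl}$, using $g(t)\to \hat g$ locally uniformly (Simon's theorem) on the right and $C^1$-convergence of $\phi_t$ on the left, to conclude $\hat g=\phi^* g_{Eucl}$.

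The main obstacle I anticipate is this last limiting argument. The key technical points are: choosing a normalization for the $\phi_t$ that makes the family compact and prevents mass escaping to infinity; verifying that the Jacobians of the $\phi_t$ are uniformly bounded above and below, so that the limit $\phi$ is a $C^{1,\alpha}$ homeomorphism rather than merely a $C^{1,\alpha}$ map; and confirming that $\phi$ is a global bijection onto $\mathbf{R}^n$, exploiting the asymptotic structure of $M$ and the injectivity of each $\phi_t$. Regularity of $\phi$ cannot exceed $C^{1,\alpha}$ since $\hat g$ is only Lipschitz across $\Sigma$, matching the statement of Theorem~\ref{thm:zeromass}.
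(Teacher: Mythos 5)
Your proof of the first statement, $m(\hat g)\geqslant 0$, is essentially the paper's own argument: the same sandwich $0\leqslant m(g(t))\leqslant m(\hat g)$, obtained from Lemma~\ref{thm:positive} and Lemma~\ref{lem:af} feeding into Theorem~\ref{thm:mass-limit} on one side, and Corollary~\ref{cor:mass-constant} plus coordinate-invariance of the ADM mass (the paper cites Bartnik~\cite[Theorem 4.2]{Bar86} where you argue via decay of the DeTurck vector field) together with the smooth positive mass theorem on the other. That half is fine.

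The rigidity argument, however, has a genuine gap at exactly the step you flagged as the main obstacle. You extract $\phi_t$ from the equality case of the positive mass theorem separately at each time $t$ and propose to pass to the limit $t\to 0$ by Arzel\`a--Ascoli, using ``the uniform $C^{1,\alpha}_{\delta'}$ bound on $g(t)$.'' But Lemma~\ref{lem:af} provides that bound only for $t\in[t_0,T]$ with a constant $\kappa$ depending on $t_0$: in the appendix proof the second-derivative estimate is produced by the term $tf$ in the barrier, so the H\"older control on $\nabla g(t)$ degenerates as $t\to 0$, and no $t$-uniform $C^{1,\alpha}$ bound is available. Consequently the equicontinuity of $\{d\phi_t\}$ needed for a $C^1$-convergent subsequence does not follow from the estimates you invoke. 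The ingredient that \emph{is} uniform down to $t=0$ is the Lipschitz estimate \eqref{eq:C1}, $|\nabla g(t)|\leqslant C_1$, which holds precisely because $\hat g$ is Lipschitz (Simon~\cite{Sim05}); combined with Taylor's isometry-regularity formula~\cite{Tay06}, which expresses $\partial^2\phi_t$ in terms of $\partial\phi_t$ and the Christoffel symbols of $g(t)$ and $g(T)$, it yields uniform $C^2$ bounds on the diffeomorphisms, and this is exactly how the paper closes the argument. The paper also sidesteps your normalization problem (per-time PMT diffeomorphisms a priori unrelated across $t$, unique only up to Euclidean motions): once $g(t)$ is known to be flat for $t>0$, the $h$-flow reduces to pure diffeomorphism motion \eqref{eq:h-flow1}, and integrating the DeTurck field $W$ backwards from $t=T$ via the ODE \eqref{eq:h-flow2} produces a \emph{coherent} family with $\phi_T=\mathrm{Id}$ and $g(t)=\phi_t^*g(T)$, with the $C^0$ bound on $\phi_t-\mathrm{Id}$ coming for free from boundedness of $W$; the equality case of the smooth positive mass theorem is then invoked only once, to identify $g(T)$ with the Euclidean metric. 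Your per-time normalization scheme could likely be made to work, but only after replacing the $C^{1,\alpha}_{\delta'}$ bound by \eqref{eq:C1} and the isometry equation to restore compactness as $t\to 0$.
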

\begin{proof}
	Consider the solution $g(t)$ to the $h$-flow as above. Note that
	$m(\hat{g}_\epsilon)=m(\hat{g})$ for all $\epsilon$ since we are
	only changing the metric outside a ball. For each $\epsilon >
	0$, there are diffeomorphisms $\phi_{\epsilon,t}$ such that 
	$\phi^*_{\epsilon,t}g_\epsilon(t)$ is 
	a solution of the Ricci flow, and $\phi_{\epsilon,0}$ is the
	identity. Applying the results of
	Section~\ref{sec:mass}, Corollary~\ref{cor:mass-constant}
	implies that
	$m(\phi^*_{\epsilon,t} g_\epsilon(t)) = m(\hat{g}_\epsilon) = 
	m(\hat{g})$ 
	for $t>0$. Since under our decay conditions the mass is
	independent of the choice of asymptotic coordinates (see
	Bartnik~\cite[Theorem 4.2]{Bar86}), we also have
	$m(g_\epsilon(t)) = m(\hat{g})$. 
	Finally, we use that $g(t) = \lim g_\epsilon(t)$, and
	Theorem~\ref{thm:mass-limit} implies that $m(g(t))\leqslant
	m(\hat{g})$. 

	By Theorem \ref{thm:mass-limit} the metrics
	$g(t)$ have non-negative scalar curvature for $t>0$ and they are
	asymptotically flat in $C^{1,\alpha}_{\delta'}$ for some
	$\delta' > (n-2)/2$, so 
	by the positive mass theorem, $m(g(t))\geqslant
	0$. We therefore have $m(\hat{g})\geqslant 0$. 

	Suppose now that $m(\hat{g})=0$. Then necessarily $m(g(t))=0$ for
	$t>0$. From the equality case of the positive mass theorem,
	$g(t)$ is flat for $t>0$. This means that the $h$-flow is just
	acting by diffeomorphisms, according to the equations
	\begin{equation}\label{eq:h-flow1}
		\begin{aligned}
		\frac{\partial}{\partial t} g_{ij} &= \nabla_i W_j +
		\nabla_j W_i \\
		W_j &= g_{jk}g^{pq} (\Gamma_{pq}^k -
		\tilde{\Gamma}_{pq}^k),
	\end{aligned}
	\end{equation}
	where $\tilde{\Gamma}$ are the Christoffel symbols of $h$. 
	From the ODE
	\begin{equation}\label{eq:h-flow2} \begin{aligned}
		\frac{\partial}{\partial t} (\phi_t(p)) &= W(\phi_t(p),t);
		\\
		\phi_T(p) &= \mathrm{Id}(p),
	\end{aligned}
	\end{equation}
	we obtain a family of diffeomorphisms $\phi_t$ for $t>0$ such that 
	$g(t) = \phi_t^* g(T)$.  We can think of $\phi_t$ as a
	perturbation of the identity 
	given by $\phi_t(p)=\mathrm{Id}(p)+\psi_t(p)$.
	Since the metric $g$ we started with was Lipschitz, the bound
	\eqref{eq:C1} applies, so 
	$|\nabla g(t)| < C_1$ for $t>0$. Since the
	$g(t)$ are bounded in $C^1$, the $\psi_t$ are bounded in $C^2$.
	This can be seen by the method of Taylor~\cite{Tay06}, in
	particular Equation (2.13), which expresses the second
	derivatives of $\psi_t$ in terms of its first derivatives and
	the Christoffel symbols of $g(t)$ and $g(T)$. The formula is
	\[ \frac{\partial^2 \phi_t^m}{\partial x^i\partial x^j} = 
	\Gamma^k_{ij}\frac{\partial \phi_t^m}{\partial x^k} -
	\hat{\Gamma}^m_{kl} \frac{\partial\phi_t^k}{\partial x^i}
	\frac{\partial \phi_t^l}{\partial x^j},\]
	where $\phi_t^m$ are the components of $\phi_t$, and
	$\hat{\Gamma}$ are the Christoffel symbols of $g(T)$. The same
	formula holds for the second derivatives of $\psi_t$ (in terms
	of the first derivatives of $\phi_t$). 
	Moreover we can bound $\psi_t$ in
	$C^0$, using Equation~\eqref{eq:h-flow2} together with the fact
	that the vector field $W$ is bounded in $C^0$ from its
	definition~\eqref{eq:h-flow1}. So the $\psi_t$ are bounded in
	$C^2$, and 
	we can extract a subsequence converging in $C^{1,\alpha}$ to
	some $\psi_0$. It follows that $g = \phi_0^* g(T)$, where
	$\phi_0 = \mathrm{Id} + \psi_0$, and $g(T)$
	is the flat Euclidean metric on $\mathbf{R}^n$. So up to a
	$C^{1,\alpha}$ change of coordinates, $g$ is the flat metric. 
\end{proof}

\section{Appendix - Proof of Lemma~\ref{lem:af}}
In this section we give the proof of Lemma~\ref{lem:af}. Recall that we
have a smooth approximation $\hat{g}_\epsilon$ to our singular metric
$\hat{g}$, and $g_\epsilon(t)$ is the solution of the $h$-flow with
initial metric $\hat{g}_\epsilon$. The key point in this lemma is that
since $\hat{g}$ is Lipschitz, the smoothings satisfy a 
uniform weighted $C^1$-bound $\Vert \hat{g}_\epsilon -
h\Vert_{C^1_\delta} < C$, where $C$ is independent of $\epsilon$. We
will show that this bound (with a larger $C$) is preserved for positive
time along the flow, and in addition if $t \geqslant t_0 > 0$, then 
$|\nabla^2 g_\epsilon(t)| < C'\rho^{-\delta-1}$. Note that we do not
obtain the natural decay of $\rho^{-\delta-2}$ for the second
derivatives
(see the remark after the following proof). 
\begin{proof}
	In this proof we will simply write $g$ instead of
	$g_{\epsilon}(t)$. Let 
	\[\eta_{ij} = g_{ij} - h_{ij},\]
	where $h$
	is our background reference metric. From \cite{Sim02,Sim05} we
	know that $g$ is uniformly equivalent to $h$ for $t\in[0,T]$ and
	also we
	have the estimate \eqref{eq:C1}, so for some
	constant $c_0$ we have $|\nabla\eta| < c_0$ for $t\in[0,T]$.
	Let us first show that $|\eta|$ has the required decay. The
	evolution equation for $\eta$ is given by (see \cite[Equation
	(1.5)]{Sim02})
	\begin{equation}\label{eq:ddteta}
		\begin{aligned}
		\frac{\partial}{\partial t} \eta_{ab} =&
		\Delta \eta_{ab} -
		g^{cd}g_{ap}h^{pq}\tilde{R}_{bcqd} -
		g^{cd}g_{bp}h^{pq}\tilde{R}_{acqd} \\
		&+ \frac{1}{2} g^{cd}g^{pq}\big(\nabla_a \eta_{pc}\nabla_b
		\eta_{qd} + 2\nabla_c\eta_{ap}\nabla_{q}\eta_{bd} \\
		&- 2\nabla_c
		\eta_{ap}\nabla_d\eta_{bq}-4\nabla_a\eta_{pc}
		\nabla_d\eta_{bq}\big),
		\end{aligned}
	\end{equation}
	where all the derivatives are with respect to $h$, $\tilde{R}$
	is the curvature of $h$ and $\Delta$ is the operator
	$g^{cd}\nabla_c\nabla_d$. From this we obtain the following
	schematic equation. 
	\[ \frac{\partial}{\partial t} |\eta|^2 = \Delta |\eta|^2 +
	\eta\star \tilde{R} + \eta\star\nabla\eta\star\nabla\eta -
	2h^{ik}h^{jl}g^{pq}\nabla_p\eta_{ij}\nabla_q\eta_{kl},\]
	where $\star$ means an algebraic operation involving
	contractions with respect to $h$ and $g$ and the norms are
	computed with respect to $h$. Since $g$ is
	$\epsilon(n)$-fair to $g$ (see Definition~\ref{defn:fair}) for
	some small $\epsilon(n)$, we have
	\[ h^{ik}h^{jl}g^{pq}\nabla_p\eta_{ij}\nabla_q\eta_{kl}
	\geqslant \frac{3}{4} |\nabla\eta|^2,\]
	where the norm is measured using $h$. In addition we can bound
	\[ |\eta\star\nabla\eta\star\nabla\eta| \leqslant C_1|\eta|^2 +
	\frac{1}{2}|\nabla\eta|^2, \]
	using that $|\nabla\eta|$ is uniformly bounded. It follows that
	\begin{equation}\label{eta2}
		\frac{\partial}{\partial t} |\eta|^2 \leqslant \Delta|\eta|^2
		+ C_1|\eta|^2 + C_2|\tilde{R}|^2 - |\nabla\eta|^2.
	\end{equation}
	We can assume that $h$ is asymptotically flat in $C^k_\delta$
	for any $k$, so in particular
	$|\tilde{R}| < C\rho^{-\delta-2}$. We obtain
	\[ \frac{\partial}{\partial t} |\eta|^2\leqslant \Delta|\eta|^2
	+ C_1|\eta|^2 + C_3\rho^{-2\delta-4}. \]
	Using the formulas
	\[ \begin{aligned}
		\Delta (\rho^{2\delta}|\eta|^2) &= \rho^{2\delta}\Delta |
		\eta|^2 +
		4\delta\rho^{2\delta-1}\nabla\rho\cdot\nabla|\eta|^2 +
		\Delta(\rho^{2\delta})|\eta|^2 \\
		\rho^{-1}\nabla\rho\cdot\nabla(\rho^{2\delta}
		|\eta|^2) &=
		2\delta\rho^{2\delta-2}|\nabla\rho|^2|\eta| +
		\rho^{2\delta-1}\nabla\rho\cdot\nabla|\eta|^2,
	\end{aligned}\]
	together with bounds on $\rho,\nabla\rho,\Delta\rho$, we
	get
	\[ \frac{\partial}{\partial t} \rho^{2\delta}|\eta|^2
	\leqslant \Delta(\rho^{2\delta}|\eta|^2) -
	4\delta\rho^{-1}\nabla\rho\cdot\nabla(\rho^{2\delta}|\eta|^2)
	+ C_4\rho^{2\delta}|\eta|^2 + C_3. \]
	Since $\rho^{2\delta}|\eta|^2 < \kappa_0$ at $t=0$, the
	maximum principle (the version on non-compact manifolds due to
	Ecker-Huisken~\cite{EH91} for example)
	implies that for some $\kappa$ we have
	$\rho^{2\delta}|\eta|^2 < \kappa$ for $t\in[0,T]$. 

	Let us now consider $|\nabla\eta|^2$. From \eqref{eq:ddteta} we
	obtain
	\[\begin{aligned}
		\frac{\partial}{\partial t} |\nabla\eta|^2 =& \Delta |
		\nabla\eta|^2 + \nabla\eta\star\nabla\eta\star
		\nabla^2\eta + \nabla\eta\star\nabla\eta\star
		\nabla\eta\star\nabla\eta \\
		&+ \nabla\eta\star\nabla\eta \star \tilde{R} +
		\nabla\eta\star \nabla\tilde{R} \\
		&-2g^{cd}h^{ik}h^{jl}h^{pq}\nabla_c\nabla_p\eta_{ij}
		\nabla_d\nabla_q\eta_{kl},
	\end{aligned}\]
	where several terms come from commuting derivatives, and also
	from differentiating $g^{ab}$ in $\Delta = g^{ab}\nabla_a
	\nabla_b$. 
	With similar arguments as above, we obtain
	\begin{equation}\label{eq:nablaeta}
		\frac{\partial}{\partial t} |\nabla\eta|^2 \leqslant
		\Delta|\nabla\eta|^2 + C_1|\nabla\eta|^2 +
		C_2\rho^{-2\delta-4} - |\nabla^2\eta|^2.
	\end{equation}
	We will use the negative squared term below, but for now 
	repeating the arguments above, and using that at $t=0$ we have
	$\rho^{2\delta+2}|\nabla\eta|^2<\kappa_0$, we again find that
	for some $\kappa > 0$ we have $\rho^{2\delta+2}|\nabla\eta|^2
	< \kappa$ for $t\in[0,T]$. 

	Finally, consider $|\nabla^2\eta|^2$. Once again from
	\eqref{eq:ddteta}, using similar arguments, we get
	\[ \begin{aligned}
		\frac{\partial}{\partial t} |\nabla^2\eta|^2 =& 
		\Delta |\nabla^2\eta|^2
		+\nabla^2\eta\star\nabla\eta\star\nabla^3\eta +
		\nabla^2\eta\star\nabla^2\eta\star\nabla^2\eta\\
		&+ \nabla^2\eta\star \nabla^2\eta\star \nabla\eta
		\star\nabla\eta + \nabla^2\eta\star \nabla^2\eta\star
		\tilde{R}
		+ \nabla^2\eta\star\nabla\eta\star \nabla\tilde{R} \\
		&+ \nabla^2\eta\star\nabla^2\tilde{R} -
		2g^{pq}h^{ik}h^{jl}h^{ab}h^{cd}\nabla_p\nabla_a\nabla_c\eta_{ij}
		\nabla_q\nabla_b\nabla_d h_{kl}.
	\end{aligned}\]
	From this we obtain
	\[ \frac{\partial}{\partial t} |\nabla^2\eta|^2 \leqslant \Delta
	|\nabla^2\eta|^2 + C_1|\nabla^2\eta|^3 + C_2|\nabla^2\eta|^2 +
	C_3\rho^{-2\delta-2},\]
	where we also used our previous estimate on $|\nabla\eta|$. 
	Now computing, using \eqref{eq:nablaeta}, we find that if we let
	$f = (|\nabla\eta|^2+1)|\nabla^2\eta|^2$ then
	\[ \frac{\partial}{\partial t}f \leqslant \Delta f +
	C_4|\nabla^2\eta|^2 +
	C_5\rho^{-2\delta-2}.\] 
	Now using \eqref{eq:nablaeta} and the bound we already have for
	$|\nabla\eta|$ we get
	\[ \frac{\partial}{\partial t} (C_6|\nabla\eta|^2 + tf)\leqslant
	\Delta(C_6|\nabla\eta|^2 + tf) + C_7\rho^{-2\delta-2}.\]
	It follows that if we write $F = C_6|\nabla\eta|^2 + tf -
	Ct\rho^{-2\delta-2}$ for some constant $C$, then 
	\[ \frac{\partial}{\partial t} F \leqslant \Delta F +
	Ct\Delta(\rho^{-2\delta-2}) - C\rho^{-2\delta-2} +
	C_7\rho^{-2\delta-2}.\]
	Now just as in the proof of Lemma~\ref{lem:cutoff}, there is a
	constant $C_8$ such that
	$\Delta(\rho^{-2\delta-2}) < C_8\rho^{-2\delta-2}$, using that 
	the metrics along the flow are asymptotically
	flat in $C^1_\delta$, by our estimates on $\eta$ and
	$\nabla\eta$. So we have
	\[ \frac{\partial}{\partial t} F \leqslant \Delta F +
	C(C_8t-1)\rho^{-2\delta-2} + C_7\rho^{-2\delta-2}.\]
	As long as $t < (2C_8)^{-1}$, we can choose $C$ sufficiently
	large, so that 
	\[ \frac{\partial}{\partial t} F \leqslant \Delta F. \]
	Using the maximum principle as before, 
	we obtain that for $t\geqslant t_0 >
	0$ and as long as $t < (2C_8)^{-1}$, 
	there is some constant $C_9$, for which $f <
	C_9\rho^{-2\delta-2}$. This implies that
	$|\nabla^2\eta | < C_{10}\rho^{-\delta-1}$
	for some $C_{10}$. Since $|\nabla\eta|$ satisfies the same bound, we
	get the required weighted $C^{1,\alpha}$ bound on $\eta$. 
\end{proof}

\begin{rem}\label{rem:decay}
	It is natural to expect that for the second derivatives
	of $\eta$ we can obtain the improved decay $|\nabla^2\eta| <
	C\rho^{-\delta-2}$ for $t \geqslant t_0 > 0$. This does not seem
	to be the
	case however, since even for the linear heat equation on
	$\mathbf{R}$ the analogous statement does not hold.
	One can check that if $f(x,t)$ is the bounded solution of
	$\partial_t f = \partial^2_x f$ on $\mathbf{R}$, with
	\[ f(x,0) = \frac{1}{1+x^2} \sin(x),\]
	then for each fixed $t_0 \geqslant 0$ and $k$ we have
	$|\partial_x^k f(x,t_0)| = O(|x|^{-2})$, and the decay cannot be
	improved even for positive time. This is why we work in the
	weighted $C^{1,\alpha}$ spaces with sufficiently small $\alpha >
	0$. 
\end{rem}

\end{document}